\newtheorem{thm}{Theorem}[section]
\newtheorem{cor}[thm]{Corollary}
\newtheorem{lem}[thm]{Lemma}
\newtheorem{prop}[thm]{Proposition}
\theoremstyle{definition}
\newtheorem{defn}[thm]{Definition}
\newtheorem{rem}[thm]{Remark}
\newtheorem{qu}[thm]{Question}
\numberwithin{equation}{section}
\newcommand{\set}[1]{\left\{#1\right\}}
\newcommand{\pr}[1]{\left(#1\right)}
\newcommand{\valu}[1]{\left\langle #1\right\rangle}
      \newcommand{\mf}[1]{\mathbf{#1}}
\begin{document}

\title[Nilpotent probability of compact groups]{Nilpotent probability of compact groups}%

\author[A. Abdollahi]{Alireza Abdollahi}%
\address{Department of Pure Mathematics, Faculty of Mathematics and Statistics, University of Isfahan, Isfahan 81746-73441, Iran.} 
\email{a.abdollahi@math.ui.ac.ir}%
\author[M. Soleimani Malekan]{Meisam Soleimani Malekan}%
 \address{School of Mathematics, Institute for Research in Fundamental Sciences, Tehran, Iran.} 
\email{msmalekan@gmail.com}

\subjclass[2010]{20E18; 20P05}%
\keywords{Nilpotent probability; Compact group}%

\begin{abstract}
Let $k$ be any positive integer and $G$ a compact (Hausdorff) group. Let $\mf{np}_k(G)$ denote the probability that $k+1$ randomly 
chosen elements $x_1,\dots,x_{k+1}$ satisfy $[x_1,x_2,\dots,x_{k+1}]=1$. 
We study the following problem: If $\mf{np}_k(G)>0$ then, does there exist an open nilpotent subgroup of class at most $k$? The answer is positive for profinite groups and we give a new proof. We also prove that the connected component $G^0$ of $G$ is abelian and there exists a closed normal nilpotent subgroup $N$ of class at most $k$ such that $G^0N$ is open in $G$. 
\end{abstract}
\maketitle
\section{Introduction and Results}
Let $G$ be a compact (Hausdorff) group and denote by ${\mf m}_G$ the (unique) Haar measure on $G$. The following question is proposed in \cite[Question 1.20.]{MTVV}:
\begin{qu}\label{Question}
Let $G$ be a compact group, and suppose that
$${\mathcal N}_k(G):=\{(x_1,\dots, x_{k+1}) \in G^{k+1} \;|\; 	[x_1, \dots, x_{k+1}] = 1\},$$
has positive Haar measure in $G^{k+1}$. Does $G$ have an open $k$-step nilpotent
subgroup?
\end{qu}
 Here $[x_1,x_2]=x_1^{-1}x_2^{-1}x_1 x_2$ and  $[x_1,\dots,x_{k+1}]$ is inductively defined by $[[x_1,\dots,x_k],x_{k+1}]$ for all $k>1$. The Haar measure of $\mathcal N_k(G)$ is called the degree of $k$-nilpotence of the group $G$, and denoted in \cite{MTVV} by $\mf{dc}^k(G)$.
 \par Denote by $\mathsf{PA}_k$ the class of compact groups giving positive answer to Question \ref{Question}. 
 In \cite{LP} it is proved that  $\mathsf{PA}_1$ contains profinite groups and in \cite{HR} it is proved that  all compact groups includes $\mathsf{PA}_1$.
 In \cite{AS0} it is shown that $\mathsf{PA}_2$ contains all compact groups.
 Martino et al. have shown that $\mathsf{PA}_k$ contains all profinite groups for any $k$ (see \cite[Theorem 1.19.]{MTVV}). Here in Theorem \ref{anotherproof} below, we give another proof of the latter result without using  submultiplicativity of the $k$-nilpotence degree of a finite group  i.e. for a normal subgroup $N$ of a finite group $G$, $\mf{dc}^k(G)\leq\mf{dc}^k(G/N)\mf{dc}^k(N)$ (see \cite[Theorem 1.21.]{MTVV}).\\ 
 
 Our main result is the following.
\begin{thm}\label{main} Let $G$ be a compact group with $\mf{np}_k(G)>0$. Then the connected component $G^0$ of $G$ is abelian and there exists a closed normal nilpotent subgroup $N$ of class at most $k$ such that $G^0N$ is open in $G$. 
\end{thm}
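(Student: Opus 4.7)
The plan is to reduce both claims to the profinite case (Theorem~\ref{anotherproof}) applied to the quotient $G/G^0$, using the monotonicity $\mf{np}_k(G)\le\mf{np}_k(G/H)$ for any closed normal $H\trianglelefteq G$: the projection $G\to G/H$ carries $\mathcal{N}_k(G)$ into $\mathcal{N}_k(G/H)$ and preserves Haar measure on the $(k+1)$-fold product. In particular $\mf{np}_k(G/G^0)>0$.

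To show $G^0$ is abelian, I would argue by contradiction. If $[x,y]\ne 1$ for some $x,y\in G^0$, Peter--Weyl yields a closed normal $M\trianglelefteq G$ with $G/M$ a compact Lie group and $[x,y]\notin M$; using that $(G/M)/\pi_M(G^0)=G/(G^0 M)$ is a profinite quotient of $G/G^0$ and so has trivial identity component, one sees $\pi_M(G^0)=(G/M)^0$, making the latter a non-abelian connected compact Lie group. The contradiction then follows from the auxiliary lemma: \emph{for any compact Lie group $L$ with $L^0$ non-abelian, $\mf{np}_k(L)=0$}, proved by real-analyticity of the word map $\omega=[x_1,\ldots,x_{k+1}]\colon L^{k+1}\to L$: on each connected component of $L^{k+1}$, $\omega^{-1}(1)$ is either the whole component or a proper analytic subset of measure zero, and the first case is excluded on $(L^0)^{k+1}$ because a nilpotent connected compact Lie group is abelian. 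Monotonicity then gives $\mf{np}_k(G)\le \mf{np}_k(G/M)=0$.

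To construct $N$, apply Theorem~\ref{anotherproof} to $G/G^0$ and take the normal core of the resulting open $k$-nilpotent subgroup, obtaining an open normal $P/G^0\trianglelefteq G/G^0$ nilpotent of class $\le k$. So $P\trianglelefteq G$ is open, contains $G^0$, and $P/G^0$ is profinite nilpotent of class $\le k$. Write $P=\varprojlim P/L_i$ as an inverse limit of compact Lie groups, choosing $L_i$ so that $(P/L_i)^0=T_i$ is the torus $G^0/(G^0\cap L_i)$ and $F_i=P/(G^0 L_i)$ is finite nilpotent of class $\le k$. Within each $P/L_i$, after passing to a sufficiently deep open subgroup of $F_i$, the extension of (the image of) $F_i$ by $T_i$ can be split using divisibility of $T_i$ together with the finite order of the corresponding $H^2$-obstruction, yielding a nilpotent-of-class-$\le k$ lift $N_i\le P/L_i$ with $T_iN_i$ of bounded finite index. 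A compactness/Zorn argument then assembles the $N_i$ into a closed subgroup $N\le P$, normal in $G$, nilpotent of class $\le k$, with $G^0 N$ open in $G$.

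The principal obstacle is making the local choices compatible across the inverse limit, which reduces to exhibiting a single open normal subgroup $\bar N\le P/G^0$ over which the restriction of the defining class in continuous $H^2(P/G^0,G^0)$ vanishes. A related technical point is that the conjugation action $P/G^0\to\mathrm{Aut}(G^0)$ is a priori only known to have profinite image (since $\mathrm{Aut}(G^0)=\mathrm{Aut}(\widehat{G^0})$ is totally disconnected but not necessarily discrete); reducing to a \emph{finite} image is automatic when $G^0$ is a finite-dimensional torus but in general seems to require a separate argument, presumably a Fubini-type use of the hypothesis $\mf{np}_k(G)>0$ producing a positive-measure set of elements that centralize $G^0$, forcing $C_G(G^0)$ to be open.
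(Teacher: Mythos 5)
Your reduction $\mf{np}_k(G)\le\mf{np}_k(G/H)$ and the identification $\pi_M(G^0)=(G/M)^0$ are fine, and the first half (abelianness of $G^0$) is close to correct, but your auxiliary lemma is not fully proved as stated: real-analyticity tells you that on \emph{each} component $g_1L^0\times\cdots\times g_{k+1}L^0$ of $L^{k+1}$ the fibre $\omega^{-1}(1)$ is either the whole component or null, yet you only exclude the first alternative on the identity component $(L^0)^{k+1}$. To conclude $\mf{np}_k(L)=0$ you must also rule out $\mf{np}(L^0;g_1,\dots,g_{k+1})=1$ for arbitrary coset representatives $g_i$, i.e.\ you need the \emph{relative} statement that this forces $L^0$ to be nilpotent of class at most $k$ (hence abelian). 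That is exactly Proposition \ref{nocamn} of the paper (proved via Propositions \ref{npleqcp} and \ref{2.4n}); the fact ``nilpotent connected compact Lie $\Rightarrow$ abelian'' alone does not cover the non-identity components. This gap is fixable, but it must be filled. (The paper itself gets abelianness of $G^0$ by a Zariski-closure argument, Lemma \ref{zariskiclosed} and Lemma \ref{CLG}, rather than analyticity; either works once the relative statement is in hand.)

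The second half, however, has a genuine gap and takes a route that both overshoots and does not go through. The theorem does not ask for a complement to $G^0$: it is enough to produce a closed normal nilpotent subgroup containing $G^0$, and this is what the paper does (for each $M\in\mathfrak{N}(G)$, $\mf{np}_k(G/M)>0$ forces $(G/M)^0=G^0M/M$ to be an open torus by Lemma \ref{CLG}, so each $G^0M$ is open normal; if no $G^0M$ were nilpotent of class $\le k$ one builds, using $\bigcap_{M\in\mathfrak{N}(G)}M=1$, an arbitrarily long closed normal series with non-$k$-nilpotent factors, contradicting Theorem \ref{finitelength}, which rests on submultiplicativity, Theorem \ref{increase wrt N}, and the bound of Theorem \ref{3/2}). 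Your splitting strategy, by contrast, relies on a claim that is false as stated: for finite $F$ and a torus $T$, divisibility of $T$ plus finite order of the class does not give $H^2(F,T)=0$ (e.g.\ $H^2((\mathbb{Z}/n)^2,\mathbb{R}/\mathbb{Z})\cong\mathbb{Z}/n$ with trivial action), and restricting to an open subgroup of \emph{bounded} index need not kill the class --- while unbounded index destroys openness of $G^0N$ in the limit. You also assert without argument that the assembled section image $N$ is normal in $G$, which complements rarely are. Finally, you yourself flag the two decisive steps --- vanishing of the continuous $H^2(P/G^0,G^0)$ class on a single open subgroup, and openness of $C_G(G^0)$ (finiteness of the conjugation action) --- as unresolved; without them there is no proof. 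I would redo this half along the paper's counting argument, which needs no cohomology and no splitting at all.
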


\section{Relative nilpotent probability and submultiplicativity}
We  denote the set of continuous functions on a compact group $G$ by $\mathcal C(G)$. If $N$ is a closed normal subgroup of $G$, then $G/N$ is a compact group, equipped with quotient topology. There exists a projection-like map $P:\mathcal C(G)\rightarrow\mathcal C(G/N)$ defined by 
\[ Pf(\overline{g}):=\int_Nf(gy)\,\text dy,\]
where $\text dy$ stands for the Haar measure on $N$. We will use the following two simple lemmas repeatedly:
\begin{lem}\label{Gotosubgroups}
Let $G$ be a compact group and $N$ be a closed normal subgroup of $G$. Then, for all $f\in\mathcal C(G)$ 
\[ \int_Gf(g)\,\text dg=\int_{G/N}Pf(\overline{g})\,\text d\overline{g}.\]
\end{lem}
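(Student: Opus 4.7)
The plan is to invoke uniqueness of the normalized Haar measure on the compact group $G$. I would define $\Lambda\colon\mathcal C(G)\to\mathbb{C}$ by
\[
\Lambda(f):=\int_{G/N}Pf(\overline g)\,\text d\overline g.
\]
The identity to be proved is exactly $\Lambda(f)=\int_G f(g)\,\text dg$, so it suffices to show that $\Lambda$ is a left $G$-invariant positive linear functional with $\Lambda(\mf 1_G)=1$; uniqueness of the Haar measure on $G$ then forces this equality for every $f\in\mathcal C(G)$.

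First I would check that $Pf\in\mathcal C(G/N)$. Well-definedness on $N$-cosets uses left-invariance of $\text dy$ on $N$: if $g'=gn$ with $n\in N$, then the substitution $y\mapsto n^{-1}y$ gives $\int_N f(gny)\,\text dy=\int_N f(gy)\,\text dy$, so the value of $Pf$ depends only on the coset $\overline g$. Continuity of $Pf$ on $G/N$ follows from uniform continuity of $f$ on the compact group $G$, together with the openness of the quotient map $G\to G/N$.

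With $P\colon\mathcal C(G)\to\mathcal C(G/N)$ in place, positivity and linearity of $\Lambda$ are immediate. Normalization follows from $P(\mf 1_G)=\mf 1_{G/N}$, since the Haar measure on $N$ has total mass one. For left-invariance, given $h\in G$ and $L_hf(g):=f(h^{-1}g)$, I would compute
\[
P(L_hf)(\overline g)=\int_N f(h^{-1}gy)\,\text dy=Pf(\overline{h^{-1}g})=(L_{\overline h}Pf)(\overline g),
\]
and then left-invariance of the Haar measure on $G/N$ yields $\Lambda(L_hf)=\Lambda(f)$.

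There is no genuine obstacle: the statement is essentially Weil's quotient integral formula, and compactness makes all measures honest probability measures, so the modular complications one meets in the locally compact setting do not arise. The only mild step is checking well-definedness of $P$ on cosets; once that is in place, the uniqueness of the Haar measure on $G$ closes the argument.
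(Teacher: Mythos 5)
Your proposal is correct and follows essentially the same route as the paper: define the functional $f\mapsto\int_{G/N}Pf(\overline g)\,\text d\overline g$, verify it is positive, normalized and left-invariant, and invoke uniqueness of the Haar integral on $G$. You merely spell out the well-definedness and continuity of $Pf$ and the invariance computation, which the paper leaves implicit.
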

\begin{proof}
The functional 
\[ I:\mathcal C(G)\rightarrow\mathbb C, \quad f\mapsto\int_{G/N}Pf(\overline{g})\,\text d\overline{g}\]
is nonzero, positive and left-invariant, also $I(1)=1$, so by the uniqueness of Haar integral the result holds.
\end{proof}
\begin{rem}
We have an analog of Lemma \ref{Gotosubgroups} whenever $H$ is  a closed not necessarily normal subgroup,  see \cite[(2.49) Theorem]{folland}.
\end{rem}
The Uniqueness of Harr measure has another achievement; consider a continuous epimorphism $\phi:G\rightarrow H$ between compact groups. Then
 \[I:\mathcal C(H)\rightarrow\mathbb C, \quad f\mapsto\int_Gf\circ\phi(g)\,\text dg\]
  defines a nonzero, positive and left-invariant functional. Aslo, $I(1)=1$, so we have 
\begin{lem}\label{themeasureinquoitient}
Let $\phi:G\rightarrow H$ be a continuous epimorphism between compact groups. Then 
\[\int_Hf(h)\,\text dh=\int_Gf\circ\phi(g)\,\text dg\quad(f\in\mathcal C(H)).\]
\end{lem}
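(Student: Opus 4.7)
The plan is to invoke uniqueness of the normalized Haar integral on the compact group $H$. Specifically, I would verify that the functional
\[ I:\mathcal C(H)\to\mathbb C,\qquad f\mapsto\int_G f\circ\phi(g)\,\text dg \]
(which is the right-hand side of the claimed identity) is a left-invariant, positive, normalized linear functional on $\mathcal C(H)$, and then conclude that it must coincide with the Haar integral $f\mapsto\int_H f(h)\,\text dh$.

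Positivity and linearity are immediate from the corresponding properties of the Haar integral on $G$ applied to $f\circ\phi$, which is continuous because $\phi$ is continuous. Normalization $I(1)=1$ follows from the normalization of the Haar measure on $G$ applied to the constant function $1$. So the only genuine content is left-invariance: given $h_0\in H$, I must show $I(L_{h_0}f)=I(f)$, where $L_{h_0}f(h):=f(h_0^{-1}h)$. This is the step where surjectivity of $\phi$ is used; without it, this equality need not hold, since one cannot realize translation by $h_0$ on $H$ as translation by some $g_0$ on $G$.

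To carry out the left-invariance step, I would pick some $g_0\in G$ with $\phi(g_0)=h_0$, which exists because $\phi$ is an epimorphism. Then for every $g\in G$,
\[ (L_{h_0}f)\bigl(\phi(g)\bigr)=f\bigl(h_0^{-1}\phi(g)\bigr)=f\bigl(\phi(g_0^{-1}g)\bigr)=(f\circ\phi)(g_0^{-1}g),\]
using that $\phi$ is a homomorphism. Integrating over $G$ and using left-invariance of $\text dg$ gives
\[ I(L_{h_0}f)=\int_G (f\circ\phi)(g_0^{-1}g)\,\text dg=\int_G (f\circ\phi)(g)\,\text dg=I(f). \]
Thus $I$ is left-invariant.

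Since the normalized left-invariant positive linear functional on $\mathcal C(H)$ is unique (this is the standard uniqueness statement for the Haar integral on a compact group), $I$ equals $f\mapsto\int_H f(h)\,\text dh$, which is exactly the identity to be proved. The only subtle point is really the use of surjectivity of $\phi$ to lift a translation on $H$ to a translation on $G$; everything else is routine application of uniqueness of Haar measure, exactly parallel to the proof of Lemma~\ref{Gotosubgroups}.
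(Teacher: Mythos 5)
Your proposal is correct and follows exactly the paper's argument: the paper likewise defines $I(f)=\int_G f\circ\phi(g)\,\text dg$, observes it is a positive, left-invariant functional with $I(1)=1$, and concludes by uniqueness of the Haar integral. You merely spell out the left-invariance verification (lifting $h_0$ to a preimage $g_0$), which the paper leaves implicit; this is the right detail to supply and is where surjectivity of $\phi$ enters.
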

\begin{lem}
Let $H$ be a closed subgroup of a group $G$. Then $yC_G(x)\cap H$ is either empty or a coset of $C_H(x)$, for all $x,y\in G$. In particular, we have $\mf m_H(yC_G(x)\cap H)\leq\mf m_H(C_H(x))$, for all $x,y\in G$.
\end{lem}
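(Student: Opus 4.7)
The plan is purely algebraic for the main assertion, with left-invariance of Haar measure giving the measure bound for free.

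First I would prove the coset description. Assume the intersection $yC_G(x)\cap H$ is nonempty and pick any $h_0$ in it. For an arbitrary $h\in yC_G(x)\cap H$, both $h$ and $h_0$ lie in $yC_G(x)$, so $h_0^{-1}h\in C_G(x)$; since $h,h_0\in H$ we also have $h_0^{-1}h\in H$, whence $h_0^{-1}h\in C_G(x)\cap H=C_H(x)$. This shows $yC_G(x)\cap H\subseteq h_0C_H(x)$. The reverse inclusion is equally direct: for $c\in C_H(x)\subseteq C_G(x)$, the element $h_0c$ lies in $H$ and in $yC_G(x)$ (using $h_0\in yC_G(x)$ and that $C_G(x)$ is a subgroup). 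Hence $yC_G(x)\cap H=h_0C_H(x)$ is a left coset of $C_H(x)$ in $H$.

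For the measure inequality, note that $H$, being a closed subgroup of the compact group $G$, is itself compact and carries a (left-invariant) Haar probability measure $\mf m_H$. If $yC_G(x)\cap H=\emptyset$, then $\mf m_H(yC_G(x)\cap H)=0\leq \mf m_H(C_H(x))$. Otherwise the first step gives $yC_G(x)\cap H=h_0C_H(x)$, and left-invariance yields $\mf m_H(h_0C_H(x))=\mf m_H(C_H(x))$, so in fact equality holds. The set in question is measurable because $C_G(x)=\{g\in G:gx=xg\}$ is closed in $G$, hence $yC_G(x)\cap H$ is closed in $H$.

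There is no genuine obstacle: the only points worth stating explicitly are the identity $C_H(x)=C_G(x)\cap H$ (immediate from definitions) and the measurability remark above. Everything else is a single application of the fact that a nonempty intersection of two cosets of subgroups $A,B\leq G$ is a coset of $A\cap B$, specialized to $A=C_G(x)$ and $B=H$.
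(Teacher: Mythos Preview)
Your argument is correct and follows the same route as the paper: pick an element of the nonempty intersection and verify it is a coset of $C_H(x)=C_G(x)\cap H$. The paper phrases this as ``if $z\in C_G(x)\cap y^{-1}H$ then $C_G(x)\cap y^{-1}H=zC_H(x)$'', which is your computation after left-multiplying by $y^{-1}$; you are simply more explicit about the measurability and the Haar-measure consequence.
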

\begin{proof}
If $z\in C_G(x)\cap y^{-1}H$, then one can easily check that $C_G(x)\cap y^{-1}H=zC_H(x)$.
\end{proof}

\begin{prop}\label{npleqcp}
Let $H$ be a closed subgroup of a compact group $G$. Then $\mf{np}(H;x,y)\leq\mf{cp}(H)$ for all $x$ and $y$ in $G$.
\end{prop}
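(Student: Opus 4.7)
The approach is to unfold the relative nilpotent probability
\[\mf{np}(H;x,y)=\mf m_{H\times H}\bigl(\{(a,b)\in H^2:[xa,yb]=1\}\bigr)\]
as an iterated double integral via Fubini and then invoke the preceding centralizer lemma once in each variable. The key observation is that, for a fixed value of one variable, the slice on which the commutator vanishes is exactly a translate of a $G$-centralizer intersected with $H$ -- which is precisely the situation that the lemma addresses.

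First, for fixed $a\in H$, the condition $[xa,yb]=1$ is equivalent to $yb\in C_G(xa)$, i.e.\ $b\in y^{-1}C_G(xa)\cap H$. Applying the preceding lemma with $y^{-1}$ and $xa$ playing the roles of $y$ and $x$ bounds the inner slice by $\mf m_H(C_H(xa))$, so integration over $a$ produces
\[\mf{np}(H;x,y)\leq\int_H\mf m_H(C_H(xa))\,\text da.\]

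To conclude, I would swap the order of integration by Fubini and rewrite the right-hand side as $\int_H\mf m_H\bigl(\{a\in H:[h,xa]=1\}\bigr)\,\text dh$. Since $[h,xa]=1$ holds iff $a\in x^{-1}C_G(h)$, the new inner slice is $x^{-1}C_G(h)\cap H$, and a second application of the lemma bounds it by $\mf m_H(C_H(h))$. Integrating over $h\in H$ then yields $\int_H\mf m_H(C_H(h))\,\text dh=\mf{cp}(H)$, which is the inequality claimed.

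I do not anticipate any genuine obstacle: the preceding lemma is essentially tailored for bounding the measure of a translate of $C_G(\cdot)$ intersected with $H$, and the proof simply fires it twice -- once to absorb the shift by $y$ in $b$, and once to absorb the shift by $x$ in $a$. In particular, no normality hypothesis on $H$ is required, which matches the generality in which the proposition is stated.
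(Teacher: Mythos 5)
Your proof is correct and follows essentially the same route as the paper: slice the double integral via Fubini, apply the coset lemma to bound one slice by $\mf m_H(C_H(\cdot))$, then swap the order of integration and apply the lemma a second time to reach $\mf{cp}(H)$. The only (immaterial) difference is that you integrate out the second variable first, whereas the paper fixes the second variable and bounds the slice in the first.
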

\begin{proof}
By the above lemma, we have 
\begin{align*}
\mf{np}(H;x,y)&=\int_H\int_H1_{\mathcal N(H;x,y)}(\xi,\eta)\,\text d\eta\,\text d\xi\\
&=\int_H\mf m_H(x^{-1}C_G(y\eta)\cap N)\,\text d\eta\\
&\leq\int_H\mf m_H(C_H(y\eta))\,\text d\eta\\
&=\int_H\mf m_H(y^{-1}C_G(\xi)\cap N)\,\text d\xi\\
&\leq\int_H\mf m_H(C_H(\xi))\,\text d\xi=\mf{cp}(H).
\end{align*}
\end{proof}
\begin{rem}\label{npleqnp}
In \cite[Lemma 6.2.]{MTVV}, using combinatorics methods, the authors have shown that for a normal subgroup $N$ of a \textit{finite} group $G$, 
\[\mf{np}(N; x_1,\dotsc, x_n)\leq\mf{np}(N;\overbrace{1,\dotsc,1}^{n\text{-times}})\]
for all $x_k\in G$, $1\leq k\leq n$. We could not implement these methods in the case of infinite compact groups to have the above inequality for a closed normal subgroup $N$, and of course we do not need this generalization.
\end{rem}
Now, we are going to prove that $\mf{np}(N; x_1,\dotsc, x_n)$ is bounded away from 1, for all closed normal subgroup $N$  of a compact group $G$ which is not nilpotent of class at most $k$ and all $x_k\in G$, $1\leq k\leq n$. 
\begin{prop}\label{2.4n}
Let $H$ be a closed subgroup of a compact group $G$. Then  
\[  \mf{np}(H; x_1,\dotsc,x_{n+1})\leq\frac12\pr{1+ \mf{np}\pr{\frac H{\mf Z\cap H}; \overline{x_1},\dotsc,\overline{x_n}}},\]
for all $x_1,\dotsc,x_n\in G$, where $\mf Z$  is the center of $G$, and $\overline x$ denotes the image of $x$ in $H/(\mf Z\cap H)$.
\end{prop}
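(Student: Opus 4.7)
My plan is to peel off the last variable using Fubini and then exploit the dichotomy that a proper closed subgroup of a compact group has Haar measure at most $1/2$. Set $c=c(\xi_1,\dotsc,\xi_n):=[x_1\xi_1,\dotsc,x_n\xi_n]\in G$; the condition $[c,x_{n+1}\xi_{n+1}]=1$ is equivalent to $\xi_{n+1}\in x_{n+1}^{-1}C_G(c)$, so Fubini yields
\begin{equation*}
\mf{np}(H;x_1,\dotsc,x_{n+1})=\int_{H^n}\mf m_H\!\left(x_{n+1}^{-1}C_G(c)\cap H\right)d\xi_1\cdots d\xi_n,
\end{equation*}
and by the coset lemma preceding Proposition~\ref{npleqcp} the inner measure is either $0$ or equal to $\mf m_H(C_H(c))$.

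The next step uses that a closed subgroup of a compact group is either the whole group or has index at least $2$, so its Haar measure lies in $\{0\}\cup\{1/n:n\geq 2\}\cup\{1\}$. Applied to $C_H(c)\leq H$, this gives $\mf m_H(C_H(c))\leq 1/2$ whenever $C_H(c)\subsetneq H$, so the integrand is bounded by $1/2$ unless $c$ centralises $H$---which it certainly does when $c\in\mf Z$. Splitting $H^n$ into $S:=\{(\xi_1,\dotsc,\xi_n):c\in\mf Z\}$ and its complement, and using the trivial bound $1$ on $S$ together with the bound $1/2$ on $H^n\setminus S$, I get
\begin{equation*}
\mf{np}(H;x_1,\dotsc,x_{n+1})\leq\mu+\tfrac12(1-\mu)=\tfrac12(1+\mu),\qquad\mu:=\mf m_{H^n}(S).
\end{equation*}
To finish, apply Lemma~\ref{themeasureinquoitient} to the continuous epimorphism $H\to H/(\mf Z\cap H)$, viewing $H/(\mf Z\cap H)$ inside $G/\mf Z$ via the natural embedding: the event $\{c\in\mf Z\}$ is precisely $\{[\overline{x_1}\overline{\xi_1},\dotsc,\overline{x_n}\overline{\xi_n}]=1\}$ in $G/\mf Z$, whence $\mu=\mf{np}(H/(\mf Z\cap H);\overline{x_1},\dotsc,\overline{x_n})$.

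The main difficulty is pinning down the pointwise $1/2$ bound on $H^n\setminus S$. Strictly speaking it can fail on the locus where $c\in C_G(H)\setminus\mf Z$, since there $C_H(c)=H$ and the coset lemma returns measure $1$; however, on that locus the integrand is zero whenever $x_{n+1}\notin C_G(c)$ (the coset $x_{n+1}^{-1}C_G(c)$ is then disjoint from $H$), so only the sub-locus $\{c\in C_G(H)\cap C_G(x_{n+1})\setminus\mf Z\}$ genuinely needs to be handled---either by absorbing it into $S$ through a refined splitting or by invoking additional structure available when the proposition is later applied.
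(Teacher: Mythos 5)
Your route is the same as the paper's (Fubini in the last coordinate, the coset lemma, a split of $H^n$ according to whether $c=[x_1\xi_1,\dotsc,x_n\xi_n]$ is central, and Lemma \ref{themeasureinquoitient} to identify the measure of that locus with the quotient probability), so everything hinges on the difficulty you flag at the end --- and that difficulty is fatal, not a technicality: on the locus $\set{c\in C_G(H)\setminus \mf Z}$ the stated inequality can actually fail. Concretely, let $G=S_4$, let $H$ be a Sylow $2$-subgroup (so $H\cong D_4$), $n=2$ and $x_1=x_2=x_3=1$. Since $[H,H]\le Z(H)$ we have $[\xi_1,\xi_2,\xi_3]=1$ for all $\xi_i\in H$, so $\mf{np}(H;1,1,1)=1$; but $\mf Z=Z(S_4)=1$, hence $H/(\mf Z\cap H)=H$ and $\mf{np}(H/(\mf Z\cap H);\overline 1,\overline 1)=\mf{cp}(D_4)=5/8$, so the right-hand side is $13/16<1$. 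The failure occurs exactly where you predicted: with probability $3/8$ the commutator $c=[\xi_1,\xi_2]$ is the central involution of $H$, which lies in $C_G(H)\setminus\mf Z$; there $C_H(c)=H$ and $x_3\in C_G(c)$, so the integrand is $1$ rather than $\le\frac12$. For the same reason neither of your suggested rescues is available: absorbing this locus into $S$ changes the right-hand side (you then prove a correct but different inequality, with the event $c\in C_G(H)\cap C_G(x_{n+1})$ --- or $c\in Z(H)$ when the $x_i$ lie in $H$ --- in place of $c\in\mf Z$), and no ``additional structure'' is supplied later, since Proposition \ref{nocamn} invokes the present statement for an arbitrary closed subgroup $H$ and arbitrary $x_i\in G$; the example above is of exactly that shape.

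You should also know that the paper's own proof glosses over the very same point: it splits along $X=\set{\mf y:[x_1y_1,\dotsc,x_ny_n]\in\mf Z\cap H}$ and justifies $\mf m_H(C_H(c))\le\frac12$ on $X^c$ by the remark that $C_G(c)$ has index at least $2$; but the integrand involves $C_H(c)=C_G(c)\cap H$, whose index in $H$ can be $1$ even when $c\notin\mf Z$ (and when $c\in\mf Z\setminus H$ even the remark about $C_G(c)$ is false). So your localisation of where the $\frac12$ bound genuinely holds, namely $c\notin C_G(H)$, is sharper than the paper's justification; the proposition as stated is only valid under an extra hypothesis forcing the relevant commutator values in $C_G(H)$ into $\mf Z$ --- for instance $C_G(H)\le\mf Z$, and in particular in the absolute case $H=G$, which is the classical bound of \cite{ERL} the proposition is meant to generalize. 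As a proof of the statement as written, your proposal (like the paper's argument) cannot be completed, because the statement itself needs amending.
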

\begin{proof}
Put $K:=\mf Z\cap H$, 
\[ X=\set{\mf y=(y_1,\dotsc,y_n)\in H^n: [x_1y_1,\dotsc,x_ny_n]\in K}\]
and 
\[ \mathcal N=\mathcal N\pr{\frac HK; \overline{x_1},\dotsc,\overline{x_n}}.\]
Then one can easily verify that 
\begin{align}\label{1}
1_X(\mf y\mf z)=1_X(\mf y)\quad (\mf y\in H^n, \mf z\in K^n)
\end{align}
($1_X$ is the characteristic function of $X$) and 
\begin{align}\label{2}
1_X(\mf y)=1\quad\text{if and only if}\quad 1_{\mathcal N}(\bar{\mf y})=1.
\end{align}
Now, we have 
\begin{align*}
 \mf{np}(H; x_1,\dotsc,x_{n+1})&=\int_{H^n}\mf m_H\pr{x_{n+1}^{-1}C_G([x_1y_1,\dotsc,x_ny_n])\cap H}\,\text d\mf y\\
 &\leq\int_{H^n}\mf m_H\pr{C_H([x_1y_1,\dotsc,x_ny_n])}\,\text d\mf y\\
&=\int_{X}\mf m_H\pr{C_H([x_1y_1,\dotsc,x_ny_n])}\,\text d\mf y\\
&\quad\,+\int_{H^n\setminus X}\mf m_H\pr{C_H([x_1y_1,\dotsc,x_ny_n])}\,\text d\mf y\\
&\leq\mf m_{H^n}(X)+\frac12(1-\mf m_{H^n}(X))\\
&=\frac12\pr{1+\mf m_{H^n}(X)}.
\end{align*}
The latter inequality holds because on $X^c$, $C_G([x_1y_1,\dots,x_ny_n])$ has index at least 2. We compute $\mf m_{H^n}(X)$: 
\begin{align*}
\mf m_{H^n}(X)&=\int_{H^n}1_{X}(\mf y)\,\text d\mf y\\
&=\int_{H^n/K^n}\int_{K^n}1_{X}(\mf y\mf z)\,\text d\mf z\,\text d\bar{\mf y}
, \quad \text{by Lemma \ref{Gotosubgroups},}
\\
&=\int_{H^n/K^n}1_{X}(\mf y)\,\text d\bar{\mf y}, 
\quad \text{by (\ref{1})},\\
&=\int_{(H/K)^n}1_{\mathcal N}(\bar{\mf y})\,\text d\bar{\mf y},
\quad\text{by Lemma \ref{themeasureinquoitient}},\\
&= \mf{np}\pr{\frac H{\mf Z\cap H}; \overline{x_1},\dotsc,\overline{x_n}}.
\end{align*}
\end{proof}
By Theorem \ref{2.4n} we obtain the following characterization of closed and nilpotent normal subgroups of a compact group:
\begin{prop}\label{nocamn}
Let $H$ be a closed subgroup of a compact group $G$. Then $H$ is nilpotent of class at most $n$ if and only if 
\[\mf{np}(H;x_1,\dotsc,x_{n+1})=1\]
 for some $x_k\in G$, $1\leq k\leq n+1$.
\end{prop}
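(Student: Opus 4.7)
My plan is to separate the trivial forward direction from the substantive converse and attack the converse by induction on $n$, with Proposition \ref{2.4n} driving the inductive step. The forward direction is obtained simply by plugging in $x_1=\cdots=x_{n+1}=1$: if $H$ has class at most $n$, then $[y_1,\dotsc,y_{n+1}]=1$ for every $(y_1,\dotsc,y_{n+1})\in H^{n+1}$, so $\mf{np}(H;1,\dotsc,1)=1$.

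For the inductive step (assuming the statement for $n-1$), if $\mf{np}(H;x_1,\dotsc,x_{n+1})=1$ then Proposition \ref{2.4n} forces
\[\mf{np}\pr{\tfrac{H}{\mf Z\cap H};\overline{x_1},\dotsc,\overline{x_n}}=1.\]
The subgroup $\mf Z\cap H$ lies in the center $\mf Z$ of $G$, hence is closed and normal in $G$, and $H/(\mf Z\cap H)$ sits inside the compact group $G/(\mf Z\cap H)$ as a closed subgroup. Applying the induction hypothesis inside $G/(\mf Z\cap H)$ then yields that $H/(\mf Z\cap H)$ has class at most $n-1$; since $\mf Z\cap H\subseteq Z(H)$, this lifts to $H$ having class at most $n$.

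The base case $n=1$ I would handle topologically rather than via Proposition \ref{2.4n}. Suppose $\mf{np}(H;x_1,x_2)=1$. The set $\set{(y_1,y_2)\in H^2:[x_1y_1,x_2y_2]=1}$ is closed in $H^2$ and of full Haar measure, and since a non-empty open subset of a compact Hausdorff group has strictly positive Haar measure this set must equal $H^2$. Plugging in $y_1=y_2=1$ gives $[x_1,x_2]=1$; plugging in $y_1=1$ and invoking $[x_1,x_2]=1$ yields $[x_1,y_2]=1$ for all $y_2\in H$; symmetrically $[x_2,y_1]=1$ for all $y_1\in H$; finally, a direct expansion of $[x_1y_1,x_2y_2]$ using these three relations collapses it to $[y_1,y_2]$, so $H$ is abelian.

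The main obstacle is really the base case, since Proposition \ref{2.4n} at $n=1$ would require a one-argument version of $\mf{np}$ that is not naturally defined, forcing a direct topological argument there. The rest runs mechanically: Proposition \ref{2.4n} packages the heart of the induction, and the only further check is the (immediate) observation that $\mf Z\cap H$ is normal in $G$, which follows because its elements are central in $G$.
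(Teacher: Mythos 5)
Your argument is correct, and its inductive skeleton is exactly the paper's: Proposition \ref{2.4n} forces $\mf{np}\pr{H/(\mf Z\cap H);\overline{x_1},\dotsc,\overline{x_n}}=1$, the induction hypothesis is applied to the closed subgroup $H/(\mf Z\cap H)$ of the compact group $G/(\mf Z\cap H)$ (legitimate, since $\mf Z\cap H$ is central in $G$, hence closed and normal), and $\mf Z\cap H\subseteq Z(H)$ lifts class at most $n-1$ to class at most $n$. The only genuine divergence is the base case $n=1$: the paper invokes Proposition \ref{npleqcp} to conclude $\mf{cp}(H)=1$ and then uses the standard (Gustafson-type) fact that a compact group with commuting probability $1$ is abelian, whereas you argue directly that $\mathcal N(H;x_1,x_2)$ is a closed subset of $H^2$ of full measure, hence equal to $H^2$ because Haar measure on a compact group has full support, and then eliminate $x_1,x_2$ by hand from the identity $[x_1y_1,x_2y_2]=1$. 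Your route is more self-contained, avoiding the commuting-probability threshold machinery at the price of a short explicit commutator computation; the paper's route is shorter given that Proposition \ref{npleqcp} and Gustafson's theorem are already available (and are reused for Theorem \ref{3/2}). Your remark that Proposition \ref{2.4n} cannot be used at $n=1$, since it would require a one-variable version of $\mf{np}$, is precisely the implicit reason the paper also treats the base case separately.
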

\begin{proof}
Only one direction needs proof.\par 
Using induction on $n$, we prove the result. For $n=1$, assume that $\mf{np}(H; x,y)=1$ for some $x,y\in G$, so by Proposition \ref{npleqcp}, $\mf{cp}(H)=1$, whence implies that $H$ is an abelian group. 
Assuming that the result is valid for $n-1$, we prove its correctness for $n$. If 
\[\mf{np}(H;x_1,\dotsc,x_{n+1})=1\]
 for some $x_k\in G$, $1\leq k\leq n+1$, then by Proposition \ref{2.4n}, we get 
 $\mf{np}\pr{\frac H{\mf Z\cap H}; \overline{x_1},\dotsc,\overline{x_n}}=1$. Now, by induction hypothesis, $\frac H{\mf Z\cap H}$ is nilpotent of class at most $n-1$, so, $H$ is nilpotent of class at most $n$.
\end{proof}
\begin{defn}\label{rknil}
Let $H$ be a closed subgroup of a compact group $G$. The \textit{the relative $k$-nilpotence of $H$ in $G$}, denoted by $\mf{np}_{k,G}(H)$, defined to be
\[\sup\set{\mf{np}(H, x_1,\dotsc, x_{k+1}): x_1,\dotsc, x_k\in G}.\]
When $H=G$, then we remove $G$ from the index and simply write $\mf{np}_k(G)$. 
\end{defn}
\begin{rem}\label{indepen}
Since the Haar measure is left-invariant, one can easily verify that 
$\mf{np}_k(G)=\mf{np}(G;x_1,\dotsc,x_{k+1})$ for all $x_1,\dotsc,x_{k+1}\in G$. So, $\mf{np}_k(G)$ is nothing but the degree of $k$-step nilpotence of $G$ defined in \cite{MTVV}. Also, by Lemma \ref{npleqcp}, we have $\mf{np}_{1,G}(H)=\mf{cp}(H)$ for a closed subgroup $H$ of a compact group $G$.
\end{rem}
The following result follows inductively from Proposition \ref{nocamn}, \ref{npleqcp} and \cite[Theorem]{Gus}.
\begin{thm}\label{3/2}
	Let $H$ be closed subgroup of a compact group $G$ which is not nilpotent of class at most $n$. Then $\mf{np}_{k,G}(H)\leq 1-\frac{3}{2^{k+1}}$, for all positive integer $k$. 
\end{thm}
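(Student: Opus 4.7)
The plan is to proceed by induction on $k$, using Proposition~\ref{npleqcp} together with Gustafson's $5/8$ theorem for the base case and Proposition~\ref{2.4n} for the inductive step. For $k=1$, a closed subgroup $H\leq G$ that is not nilpotent of class at most $1$ is simply non-abelian, so Proposition~\ref{npleqcp} yields $\mf{np}_{1,G}(H)\leq\mf{cp}(H)$ and the (compact-group) Gustafson bound gives $\mf{cp}(H)\leq 5/8 = 1-3/8$, seeding the recursion.

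For the inductive step I would assume the statement at level $k-1$ holds for closed subgroups of \emph{every} compact group and take $H\leq G$ closed with $H$ not nilpotent of class at most $k$. Proposition~\ref{2.4n} (with $n=k$) gives
\[\mf{np}(H;x_1,\dotsc,x_{k+1})\leq\tfrac12\bigl(1+\mf{np}(H/(\mf Z\cap H);\overline{x_1},\dotsc,\overline{x_k})\bigr),\]
so after taking the supremum over $(x_1,\dotsc,x_{k+1})\in G^{k+1}$ on the left and bounding the right by $\mf{np}_{k-1,G/\mf Z}(H/(\mf Z\cap H))$, the problem reduces to the relative $(k-1)$-nilpotence of $H/(\mf Z\cap H)$ inside $G/\mf Z$. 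Since $\mf Z$ is closed in the Hausdorff compact group $G$, the quotient $G/\mf Z$ is a compact group and $H/(\mf Z\cap H)\cong H\mf Z/\mf Z$ is closed in it (because $H\mf Z$ is the continuous image of the compact set $H\times\mf Z$), so the inductive hypothesis applies \emph{provided} $H/(\mf Z\cap H)$ is not nilpotent of class at most $k-1$. The key algebraic observation is that it isn't: since $\mf Z\cap H\subseteq Z(H)$, if $H/(\mf Z\cap H)$ were nilpotent of class at most $k-1$ then $\gamma_k(H)\subseteq\mf Z\cap H\subseteq Z(H)$, forcing $\gamma_{k+1}(H)=1$, contradicting the hypothesis on $H$.

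The only genuine obstacle I foresee is this passage from non-nilpotency of $H$ to non-nilpotency of $H/(\mf Z\cap H)$; once secured, substituting the inductive hypothesis $\mf{np}_{k-1,G/\mf Z}(H/(\mf Z\cap H))\leq 1-3/2^{k}$ into the displayed inequality halves the gap to produce the claimed bound at level $k$. Proposition~\ref{nocamn} is not strictly required for this estimate; it functions as the conceptual link between "$H$ nilpotent of class at most $k$" and "$\mf{np}_k(H)=1$", which is what makes it plausible that a strict positive gap exists in the first place.
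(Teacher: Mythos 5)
Your proposal is correct and is essentially the paper's intended argument: the paper disposes of Theorem \ref{3/2} with the one-line remark that it ``follows inductively'' from Propositions \ref{npleqcp}, \ref{2.4n}/\ref{nocamn} and Gustafson's theorem, and your induction (base case via Proposition \ref{npleqcp} plus the $5/8$ bound, inductive step via Proposition \ref{2.4n} applied in $G/\mf Z$ to the closed subgroup $H\mf Z/\mf Z\cong H/(\mf Z\cap H)$) is exactly that argument spelled out. Your explicit verification that $H/(\mf Z\cap H)$ cannot be nilpotent of class at most $k-1$ (since $\mf Z\cap H\subseteq Z(H)$ would force $\gamma_{k+1}(H)=1$) is precisely the point the paper leaves implicit, and it is handled correctly.
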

By Theorem \ref{3/2} in hand, we give another (simple) proof for the following statement.
\begin{thm}(\cite[Theorem 1.19.]{MTVV})\label{anotherproof}
Let $G$ is a profinite group with $\mf{np}_k(G)>0$. Then $G$ has an open $k$-step nilpotent subgroup.
\end{thm}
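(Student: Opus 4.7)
The plan is a two-stage reduction. First, pass from $G$ to its finite quotients using profiniteness. Second, prove a uniform finite version of the result, and then lift back to $G$ by a compactness argument.

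Set $\alpha=\mf{np}_k(G)>0$. For every open normal subgroup $N\lhd G$, Lemma \ref{themeasureinquoitient} applied to the continuous epimorphism $G^{k+1}\twoheadrightarrow(G/N)^{k+1}$ gives $\mf{np}_k(G/N)\ge\mf{np}_k(G)=\alpha$, since $\mathcal N_k(G)$ is contained in the preimage of $\mathcal N_k(G/N)$. So every finite quotient of $G$ inherits $k$-nilpotence probability at least $\alpha$.

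Next I would prove a finite auxiliary: there is a constant $C=C(k,\alpha)$ such that every finite group $F$ with $\mf{np}_k(F)\ge\alpha$ contains a subgroup of nilpotent class at most $k$ of index at most $C$. The proof is by induction on $|F|$. If $F$ is already nilpotent of class $\le k$, take $F$. Otherwise Theorem \ref{3/2} forces $\alpha\le 1-3/2^{k+1}$, and Fubini on the identity
\[\alpha=\frac{1}{|F|^k}\sum_{\mf x\in F^k}\frac{|C_F([x_1,\dotsc,x_k])|}{|F|}\]
produces an iterated commutator $c=[x_1,\dotsc,x_k]$ whose centraliser $H_0=C_F(c)$ has index at most $1/\alpha$ in $F$ and contains $c$ in its centre. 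One then descends either into $H_0$ or, via Proposition \ref{2.4n}, into the strictly smaller group $H_0/Z(H_0)$, to apply the inductive hypothesis for $k-1$, and lifts the resulting nilpotent-class-$\le(k-1)$ subgroup back to a nilpotent-class-$\le k$ subgroup of $F$ of bounded index.

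Finally, apply the finite auxiliary to each finite quotient $G/N$ to obtain subgroups $L_N/N\le G/N$ of nilpotent class $\le k$ and index at most $C$. A K\"onig-type compactness argument on the inverse system $\{G/N\}$ extracts a closed subgroup $L\le G$ of index at most $C$ whose image in each $G/N$ is contained in some $L_N/N$; since $\bigcap_N N=\{1\}$, each iterated commutator $[y_1,\dotsc,y_{k+1}]$ with $y_i\in L$ lies in every $N$ and so equals $1$, showing $L$ is nilpotent of class $\le k$. A closed subgroup of finite index in the profinite group $G$ is automatically open, so $L$ is the desired open subgroup.

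The main obstacle is the finite auxiliary. After extracting the central element $c$ one must control the relative $k$-nilpotence probability of the strictly smaller group or of its central quotient; Proposition \ref{2.4n} points in the right direction, but making it quantitative enough to close the induction \emph{without} invoking the submultiplicativity estimate of \cite{MTVV} is the delicate point of the argument, and it is precisely where the use of the relative probability $\mf{np}_{k,G}(H)$ and the bound in Theorem \ref{3/2} takes the place of submultiplicativity.
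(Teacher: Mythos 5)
There is a genuine gap, and it sits exactly where you flag it: the ``finite auxiliary'' (every finite $F$ with $\mf{np}_k(F)\ge\alpha$ has a class-$\le k$ subgroup of index $\le C(k,\alpha)$) is not proved, and the sketch you give does not close. After extracting the centraliser $H_0=C_F(c)$ of index $\le 1/\alpha$, you have no lower bound on any nilpotence probability of $H_0$ (or of $H_0/Z(H_0)$) in terms of $\alpha$: positivity of $\mf{np}_k(F)$ does not restrict to bounded-index subgroups without an extra argument, and Proposition \ref{2.4n} goes the wrong way for your purpose --- it yields $\mf{np}_{k-1}\pr{H/(\mf Z\cap H)}\ge 2\,\mf{np}_k(H)-1$, which is vacuous unless the probability already exceeds $1/2$, whereas $\alpha$ may be arbitrarily small. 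So the inductive hypothesis for $k-1$ is never fed any quantitative input, and the claimed constant $C(k,\alpha)$ is unsupported. This uniform finite statement is essentially the hard content of \cite[Theorem 1.19]{MTVV}, whose proof runs through the submultiplicativity $\mf{dc}^k(G)\le\mf{dc}^k(G/N)\mf{dc}^k(N)$ --- precisely the tool this paper sets out to avoid --- so invoking it, or reproving it, defeats the stated purpose. (Your first reduction, $\mf{np}_k(G/N)\ge\mf{np}_k(G)$ for open normal $N$, is fine, and the final compactness extraction can be repaired, e.g.\ by an inverse-limit argument over the finite sets of admissible subgroups containing each $N$; but these are not where the difficulty lies.)

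For contrast, the paper's proof takes a completely different and much shorter route that never passes to finite quotients: by \cite[Lemma 2.4]{AS} (a density-point type statement for profinite groups), positivity of $\mf{np}_k(G)$ produces an open \emph{normal} subgroup $N$ and elements $x_1,\dotsc,x_{k+1}\in G$ with $\mf{np}(N;x_1,\dotsc,x_{k+1})>1-\frac{3}{2^{k+1}}$; Theorem \ref{3/2} then says that a closed subgroup which is not nilpotent of class at most $k$ can never have relative probability above $1-\frac{3}{2^{k+1}}$, so $N$ itself is nilpotent of class at most $k$ and open. If you want to keep your quotient-based outline, you would need either the submultiplicativity estimate or some substitute for the missing quantitative descent; with the tools of this paper, the density lemma plus Theorem \ref{3/2} is the intended substitute.
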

\begin{proof}
By \cite[Lemma 2.4.]{AS}, there is an open normal subgroup $N$ along with elements $x_1,\dotsc, x_{k+1}\in G$ such that $\mf{np}(N;x_1,\dotsc, x_{k+1})> 1-\frac{3}{2^{k+1}}$, so by Theorem \ref{3/2}, $N$ should be nilpotent of class at most $k$.
\end{proof}

If $B$ is a measurable subgroup of a compact group $A$, then $\mf m_A(B)=[A:B]^{-1}$, by convention on ``$ \frac1\infty=0 $''. We have 
\begin{lem}\label{A=(A/B)B}
Let $G$ be a compact group, $H$ be a closed subgroup of $G$ and $N$ be a normal closed subgroup of $G$. Then 
\[ \mf m_G(H)=\mf m_N(N\cap H)\mf m_{G/N}\pr{NH/N}.\]
\end{lem}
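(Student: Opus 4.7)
The plan is to apply Lemma \ref{Gotosubgroups} to $f = 1_H$, the indicator function of $H$. Lemma \ref{Gotosubgroups} as stated holds for $f\in\mathcal C(G)$, but the identity extends to bounded Borel functions by a routine approximation: one may approach $1_H$ from above by a decreasing net of continuous functions, obtained by Urysohn's lemma since $G$ is compact Hausdorff and $H$ is closed, and use dominated convergence on both sides. With this extension, $\int_G 1_H\,\text dg = \mf m_G(H)$, and the problem reduces to evaluating $P1_H(\overline g) = \int_N 1_H(gy)\,\text dy = \mf m_N(g^{-1}H\cap N)$.

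The key step is to describe $g^{-1}H\cap N$. It is nonempty precisely when $gy\in H$ for some $y\in N$, i.e.\ when $g\in HN = NH$ (using normality of $N$), which is the same as $\overline g\in NH/N$. If nonempty, fix $y_0\in g^{-1}H\cap N$, so that $gy_0\in H$; then for any $y\in N$ one has $gy\in H$ if and only if $y_0^{-1}y\in H$, and since $y_0^{-1}y\in N$ this is equivalent to $y_0^{-1}y\in N\cap H$. Hence $g^{-1}H\cap N = y_0(N\cap H)$, and by left-invariance of the Haar measure on $N$ we conclude $\mf m_N(g^{-1}H\cap N) = \mf m_N(N\cap H)$. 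Consequently, $P1_H = \mf m_N(N\cap H)\cdot 1_{NH/N}$ on $G/N$.

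It remains to observe that $NH$ is a closed (in fact compact) subgroup of $G$, being the continuous image of the compact set $N\times H$ under multiplication, and a subgroup by normality of $N$; hence $NH/N$ is a closed subgroup of $G/N$ whose Haar measure is $\mf m_{G/N}(NH/N)$. Integrating the formula for $P1_H$ over $G/N$ via the extended Lemma \ref{Gotosubgroups} yields
\[
\mf m_G(H) = \int_{G/N} P1_H(\overline g)\,\text d\overline g = \mf m_N(N\cap H)\,\mf m_{G/N}(NH/N),
\]
as desired. There is no serious obstacle here: the only small subtlety is justifying the extension of Lemma \ref{Gotosubgroups} from $\mathcal C(G)$ to the indicator of a closed set, which is standard, and the coset identification of $g^{-1}H\cap N$ is a one-line verification.
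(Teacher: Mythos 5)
Your argument is correct, but it takes a genuinely different route from the paper's. The paper proves Lemma \ref{A=(A/B)B} by pure index arithmetic: it records beforehand that $\mf m_A(B)=[A:B]^{-1}$ for a measurable subgroup $B$ of a compact group $A$ (with the convention $\frac1\infty=0$), and then simply combines the index identities $[G:H]=[G:NH][NH:H]=[G/N:NH/N][N:N\cap H]$, so no integration enters at all. You instead apply the quotient integration formula of Lemma \ref{Gotosubgroups} to $f=1_H$, identify $g^{-1}H\cap N$ as either empty or a left coset of $N\cap H$ (the same kind of coset argument the paper uses before Proposition \ref{npleqcp}), deduce $P1_H=\mf m_N(N\cap H)\,1_{NH/N}$, and integrate over $G/N$; the coset identification, the characterization of nonemptiness by $g\in NH$, and the closedness of $NH$ are all fine. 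The one step you should tighten is the extension of Lemma \ref{Gotosubgroups} from $\mathcal C(G)$ to $1_H$: since $G$ need not be metrizable, Urysohn's lemma only provides a decreasing \emph{net} of continuous majorants of $1_H$, and the dominated convergence theorem is a statement about sequences, not nets. The standard repair is either to argue via outer regularity of the Radon measures $\mf m_G$, $\mf m_N$, $\mf m_{G/N}$ (so that, e.g., $\mf m_G(H)=\inf\set{\int_G f\,\text dg: f\in\mathcal C(G),\ f\geq 1_H}$, and similarly for the compact sets $g^{-1}H\cap N$ and $NH/N$), or simply to invoke the measure-theoretic ($L^1$) form of Weil's quotient integral formula as in \cite{folland}, which covers indicators of closed sets; with that in place your proof is complete. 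As for what each approach buys: the paper's proof is shorter and avoids all measure-theoretic technicalities, at the price of resting on the index--measure convention, whereas yours computes $P1_H$ explicitly and stays within the integration framework the paper uses elsewhere, which makes it more readily adaptable (for instance to measures of $N$-saturated sets), but requires the extension of the quotient formula beyond continuous functions.
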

\begin{proof}
The following is well known
\[ [G:H]=[G:NH][NH:H]=[G/N:NH/N][N:N\cap H].\]
This along with the explanation before the lemma implies the result.
\end{proof}
\begin{thm}\label{increase wrt N}
Let $G$ be a compact group with closed subgroups $H$ and closed normal subgroup $N$ contained in $H$. Then $\mf{np}_{k,G}(H)\leq\mf{np}_{k,G/N}(H/N)\mf{np}_{k,G}(N)$. 
\end{thm}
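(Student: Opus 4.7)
The plan is to fix arbitrary $x_1,\dotsc,x_{k+1}\in G$ and prove the pointwise estimate
\[
\mf{np}(H;x_1,\dotsc,x_{k+1})\leq\mf{np}(H/N;\bar x_1,\dotsc,\bar x_{k+1})\cdot\mf{np}_{k,G}(N),
\]
from which the theorem follows by taking suprema over the $x_i$ in light of Definition~\ref{rknil}. The mechanism is a Fubini-style decomposition of the integration over $H^{k+1}$ against the normal subgroup $N^{k+1}$, analogous to the one used inside the proof of Proposition~\ref{2.4n}.

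I would apply Lemma~\ref{Gotosubgroups} in each of the $k+1$ coordinates relative to the closed normal subgroup $N\leq H$, rewriting
\[
\mf{np}(H;x_1,\dotsc,x_{k+1})=\int_{(H/N)^{k+1}}\int_{N^{k+1}}1_{\{[x_1g_1n_1,\dotsc,x_{k+1}g_{k+1}n_{k+1}]=1\}}\,d\mathbf n\,d\bar{\mathbf g},
\]
where $g_i\in H$ is any representative of $\bar g_i\in H/N$, the well-definedness of the inner integral under this choice being exactly that of the projection $P$ in Lemma~\ref{Gotosubgroups}. Since $x_ig_i\in G$, the inner integral is by definition $\mf{np}(N;x_1g_1,\dotsc,x_{k+1}g_{k+1})$, hence at most $\mf{np}_{k,G}(N)$. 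Moreover, projecting the commutator identity to $G/N$ forces $[\bar x_1\bar g_1,\dotsc,\bar x_{k+1}\bar g_{k+1}]=1$, so the outer integrand vanishes off the set $\mathcal N(H/N;\bar x_1,\dotsc,\bar x_{k+1})$, whose Haar measure in $(H/N)^{k+1}$ is precisely $\mf{np}(H/N;\bar x_1,\dotsc,\bar x_{k+1})\leq\mf{np}_{k,G/N}(H/N)$. Bounding the inner integrand by $\mf{np}_{k,G}(N)$ on this set and by $0$ off it yields the displayed pointwise inequality.

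The main obstacle I expect is the bookkeeping at the first step: one must verify that the iterated application of Lemma~\ref{Gotosubgroups} really produces $\mf{np}(N;x_1g_1,\dotsc,x_{k+1}g_{k+1})$ as the inner integrand, independent of the representatives chosen. Replacing $g_i$ by $g_im_i$ with $m_i\in N$ simply translates the $n_i$ integration variable from the left inside $N$, which the Haar measure absorbs; once this is checked, the rest of the argument is a clean averaging.
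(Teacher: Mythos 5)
Your proposal is correct, but it takes a genuinely more direct route than the paper. The paper first freezes the last coordinate, bounding $\mf{np}(H;x_1,\dotsc,x_{k+1})$ by $\int_{H^k}\mf m_H\pr{C_H([x_1y_1,\dotsc,x_ky_k])}\,\text d\mf y$, then disintegrates only the first $k$ coordinates over $N^k$ and factors the centralizer measure via Lemma \ref{A=(A/B)B} as $\mf m_N(C_N(\cdot))\,\mf m_{G/N}\pr{NC_H(\cdot)/N}$, using the inclusion $NC_H(\cdot)/N\subseteq C_{H/N}(\overline{\,\cdot\,})$, and finally recognizes the two factors as $\mf{np}(N;x_1y_1,\dotsc,x_ky_k,1)$ and $\mf{np}(H/N;\overline{x_1},\dotsc,\overline{x_k},1)$. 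You instead disintegrate the indicator of $\mathcal N(H;x_1,\dotsc,x_{k+1})$ in all $k+1$ coordinates at once over $N^{k+1}$, identify the fibre integral as exactly $\mf{np}(N;x_1g_1,\dotsc,x_{k+1}g_{k+1})\leq\mf{np}_{k,G}(N)$ (here it is essential, and you implicitly use, that the supremum in Definition \ref{rknil} ranges over all of $G$, which covers the elements $x_ig_i$), and note that it vanishes off $\mathcal N(H/N;\overline{x_1},\dotsc,\overline{x_{k+1}})$. This avoids the centralizer estimates and Lemma \ref{A=(A/B)B} altogether, gives the cleaner pointwise inequality $\mf{np}(H;x_1,\dotsc,x_{k+1})\leq\mf{np}(H/N;\overline{x_1},\dotsc,\overline{x_{k+1}})\,\mf{np}_{k,G}(N)$ with the same tuple on both sides, and sidesteps the $\mf m_{H/N}$ versus $\mf m_{G/N}$ bookkeeping that makes the paper's displayed chain slightly awkward; what the paper's route buys is reuse of the centralizer machinery already built for Propositions \ref{npleqcp} and \ref{2.4n}. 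The one caveat, shared equally by the paper (e.g. its application of Lemma \ref{Gotosubgroups} to $1_X$ in Proposition \ref{2.4n}), is that Lemma \ref{Gotosubgroups} is stated for continuous functions, so one should remark that the Weil formula extends to the indicator of the closed set $\mathcal N(H;x_1,\dotsc,x_{k+1})$ and that the fibre integral is measurable in $\bar{\mf g}$; your observation about independence of the coset representatives correctly handles the well-definedness issue.
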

\begin{proof}
For $x_1,\dotsc,x_{k+1}\in G$ we have
\begin{align*}
\mf{np}(H;x_1,\dotsc,x_{k+1})&\leq\int_{H^k}\mf m_H(C_H([x_1y_1,\dotsc,x_ky_k]))\,\text d\mf y.
\end{align*}
By Lemmas \ref{Gotosubgroups} and \ref{A=(A/B)B},  
\begin{footnotesize}
\begin{align*}
\int_{H^k}&\mf m_H(C_H([x_1y_1,\dotsc,x_ky_k]))\,\text d\mf y=\int_{H^k/N^k}\int_{N^k}
\mf m_H(C_H([x_1y_1z_1,\dotsc,x_ky_kz_k]))\,\text d\mf z\,\text d\bar{\mf y}\\
&=\int_{H^k/N^k}\int_{N^k}\mf m_N(C_N([x_1y_1z_1,\dotsc,x_ky_kz_k]))\mf m_{G/N}\pr{NC_H([x_1y_1z_1,\dotsc,x_ky_kz_k])/N}\,\text d\mf z\,\text d\bar{\mf y}\\
&\overset{(*)}{\leq}\int_{H^k/N^k}\int_{N^k}\mf m_N(C_N([x_1y_1z_1,\dotsc,x_ky_kz_k]))\mf m_{H/N}\pr{C_{H/N}([\overline{x_1y_1},\dotsc,\overline{x_ky_k}])}\,\text d\mf z\,\text d\bar{\mf y}\\
&=\int_{H^k/N^k}\mf m_{H/N}\pr{C_{H/N}([\overline{x_1y_1},\dotsc,\overline{x_ky_k}])}
\pr{\int_{N^k}\mf m_N(C_N([x_1y_1z_1,\dotsc,x_ky_kz_k]))\,\text d\mf z}\,\text d\bar{\mf y}\\
&=\int_{H^k/N^k}\mf m_{G/N}\pr{C_{G/N}([\overline{x_1y_1},\dotsc,\overline{x_ky_k}])}
\mf{np}(N;x_1y_1,\dotsc,x_ky_k,1)\,\text d\bar{\mf y}\\
&\leq\mf{np}_{k,G}(N)\int_{(G/N)^k}\mf m_{G/N}\pr{C_{G/N}([\overline{x_1}\overline{y_1},\dotsc,\overline{x_k}\overline{y_k}])}\,\text d\bar{\mf y}\\
&\leq\mf{np}_{k,G}(N)\mf{np}_{k,G/N}(H/N).
\end{align*}
\end{footnotesize}
The inequality (*) obtained by the inclusion
\[NC_H([x_1y_1,\dotsc,x_ky_k])/N\subset C_{H/N}([\overline{x_1y_1},\dotsc,\overline{x_ky_k}]).\]
\end{proof}
As a corollary of the above theorem, we state:
\begin{cor}\label{increase wrt Nv2}
Let $N$ be a closed normal subgroup of a compact group $G$. Then $\mf{np}_k(G)\leq\mf{np}_k(G/N)\mf{np}_{k,G}(N)$. In particular, $\mf{cp}(G)\leq\mf{cp}(G/N)\mf{cp}(N)$.
\end{cor}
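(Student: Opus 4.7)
The plan is to obtain both assertions as immediate specializations of Theorem \ref{increase wrt N}, with essentially no new work beyond bookkeeping of the indices.

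First I would take $H=G$ in Theorem \ref{increase wrt N}. The hypothesis that $N$ is contained in $H$ is trivially satisfied, so the theorem yields
\[
\mf{np}_{k,G}(G)\leq \mf{np}_{k,G/N}(G/N)\,\mf{np}_{k,G}(N).
\]
Next I would invoke Remark \ref{indepen}: because the Haar measure on a compact group is left-invariant, the value $\mf{np}(G;x_1,\dots,x_{k+1})$ does not depend on the choice of the $x_i$'s, so $\mf{np}_{k,G}(G)=\mf{np}_k(G)$, and similarly $\mf{np}_{k,G/N}(G/N)=\mf{np}_k(G/N)$. Substituting these identifications into the inequality above gives the first claim
\[
\mf{np}_k(G)\leq \mf{np}_k(G/N)\,\mf{np}_{k,G}(N).
\]

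For the ``in particular'' statement, I would simply set $k=1$ in what we have just proved. Using the other half of Remark \ref{indepen}, which records that $\mf{np}_{1,G}(H)=\mf{cp}(H)$ for any closed subgroup $H$ of $G$, the factor $\mf{np}_{1,G}(N)$ becomes $\mf{cp}(N)$, while $\mf{np}_1(G)$ and $\mf{np}_1(G/N)$ become $\mf{cp}(G)$ and $\mf{cp}(G/N)$ respectively. This yields $\mf{cp}(G)\leq \mf{cp}(G/N)\mf{cp}(N)$.

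Since everything is bundled into Theorem \ref{increase wrt N} and Remark \ref{indepen}, there is really no obstacle here; the only thing to be careful about is verifying that the relative quantity $\mf{np}_{k,G}(G)$ truly collapses to the absolute quantity $\mf{np}_k(G)$, which is exactly the content of Remark \ref{indepen} and requires no further argument.
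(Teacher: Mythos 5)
Your proposal is correct and is exactly how the paper intends the corollary to be read: it is the specialization $H=G$ of Theorem \ref{increase wrt N}, with Remark \ref{indepen} supplying the identifications $\mf{np}_{k,G}(G)=\mf{np}_k(G)$, $\mf{np}_{k,G/N}(G/N)=\mf{np}_k(G/N)$, and (for $k=1$) $\mf{np}_{1,G}(N)=\mf{cp}(N)$. The paper states the corollary without proof precisely because this bookkeeping is all that is needed, so nothing further is required.
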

\section{Proofs of Main Results}
Using Corollaries \ref{increase wrt Nv2} and \ref{3/2}, we prove the following.
\begin{thm}\label{finitelength}
Let $G$ be a compact group and assume that $\mf{np}_k(G)>0$. Then the length of all closed normal series of $G$ which its factors are not nilpotent of class at most $k$ should be bounded.
\end{thm}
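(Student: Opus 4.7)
The plan is to combine the submultiplicativity of Corollary~\ref{increase wrt Nv2} (for the topmost quotient) with the iterative submultiplicativity of Theorem~\ref{increase wrt N} (for the relative nilpotence probabilities of the terms of the series), and to use Theorem~\ref{3/2} at every step to extract a uniform multiplicative gap of $1-3/2^{k+1}$ for each factor that fails to be nilpotent of class at most $k$. Concretely, given a closed normal series
\[
1 = G_0 \subseteq G_1 \subseteq \cdots \subseteq G_n = G
\]
with each $G_i$ closed and normal in $G$ and each factor $G_{i+1}/G_i$ not nilpotent of class at most $k$, the target estimate is
\[
\mf{np}_k(G) \leq \left(1 - \tfrac{3}{2^{k+1}}\right)^{n},
\]
after which the hypothesis $\mf{np}_k(G) > 0$ forces $n \leq \log \mf{np}_k(G)/\log(1 - 3/2^{k+1})$, bounding the length of the series.

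First I would apply Corollary~\ref{increase wrt Nv2} with $N = G_{n-1}$ to obtain
\[
\mf{np}_k(G) \leq \mf{np}_k(G/G_{n-1}) \cdot \mf{np}_{k,G}(G_{n-1}).
\]
Since $G/G_{n-1}$ is the topmost factor and is therefore not nilpotent of class at most $k$, Theorem~\ref{3/2} (applied with $G/G_{n-1}$ viewed as a closed subgroup of itself), together with Remark~\ref{indepen}, yields $\mf{np}_k(G/G_{n-1}) \leq 1 - 3/2^{k+1}$.

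Next I would iterate Theorem~\ref{increase wrt N} to peel off the remaining terms. For each $0 \leq i \leq n-2$, the choice $H = G_{i+1}$, $N = G_i$ is admissible since both are closed and normal in $G$ with $G_i \subseteq G_{i+1}$. As the factor $G_{i+1}/G_i$ is a closed subgroup of the compact quotient $G/G_i$ which is not nilpotent of class at most $k$, Theorem~\ref{3/2} gives $\mf{np}_{k,G/G_i}(G_{i+1}/G_i) \leq 1 - 3/2^{k+1}$, so
\[
\mf{np}_{k,G}(G_{i+1}) \leq \left(1 - \tfrac{3}{2^{k+1}}\right) \mf{np}_{k,G}(G_i).
\]
The base case $\mf{np}_{k,G}(G_0) = \mf{np}_{k,G}(\{1\}) = 1$ is trivial (realized by $x_1 = \cdots = x_{k+1} = 1$), so iterating yields $\mf{np}_{k,G}(G_{n-1}) \leq (1 - 3/2^{k+1})^{n-1}$, which combined with the previous step produces the desired product estimate.

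There is no serious obstacle here: once the three ingredients are in place, the argument is a mechanical iteration. The one detail worth checking carefully is that each factor $G_{i+1}/G_i$ is genuinely a closed subgroup of the compact quotient $G/G_i$ (so that Theorem~\ref{3/2} applies in the correct ambient group) and that Theorem~\ref{increase wrt N} is invoked throughout with the ambient group $G$ itself; both are automatic from the assumption that every $G_i$ is closed and normal in $G$.
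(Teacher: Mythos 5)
Your proof is correct and follows essentially the same route as the paper: both iterate the submultiplicativity results (Theorem \ref{increase wrt N} together with Corollary \ref{increase wrt Nv2}) along the series and bound each resulting factor $\mf{np}_{k,G/G_i}(G_{i+1}/G_i)\leq 1-\frac{3}{2^{k+1}}$ by Theorem \ref{3/2}, so that $\mf{np}_k(G)>0$ forces the length to be at most $\log\mf{np}_k(G)/\log\pr{1-\frac{3}{2^{k+1}}}$. The only difference is organizational: the paper peels factors off top-down through the successive quotients $G/G_i$ via Corollary \ref{increase wrt Nv2}, whereas you peel bottom-up through the subgroups $G_i$ with the ambient group $G$ fixed (incidentally retaining all $n$ factors rather than the paper's $r$, a marginally sharper but immaterial estimate).
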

\begin{proof}
Let 
\[ 1=G_{r+1}\leq G_{r}\leq\dots\leq G_1\leq G_0=G\]
be a closed normal series such that $G_i/G_{i+1}$, $0\leq i\leq r$, is not nilpotent of class at most $k$. Using the Theorem \ref{increase wrt N} repeatedly, we have 
\begin{align*}
\mf{np}_k(G)&\leq\mf{np}_{k,G_0}(G_r)\mf{np}_k(G_0/G_r)\\
&\leq\mf{np}_{k,G_0}(G_r)\mf{np}_{k,G_0/G_r}(G_{r-1}/G_r)\mf{np}_k(G_0/G_{r-1})\\
&\vdots\\
&\leq\prod_{i=2}^{r+1}\mf{np}_{k,G_0/G_i}(G_{i-1}/G_i).
\end{align*}
The Corollary \ref{3/2} implies that the right hand side of the above inequality is at most 
$\pr{1-\frac{3}{2^{k+1}}}^r$. All in all, we get 
\[ r<\frac{\ln\mf{np}_k(G)}{\ln\pr{1-\frac{3}{2^{k+1}}}}.\]
\end{proof}
\begin{thm}\label{Decresingnet}
Let $G$ be a compact group with $\mf{np}_k(G)>0$. Assume that there exists a decreasing net $(H_\alpha)$ of closed normal subgroups of $G$ such that $\bigcap_\alpha H_\alpha=1$. Then $H_\alpha$ is nilpotent of class at most $k$ for sufficiently large $\alpha$.
\end{thm}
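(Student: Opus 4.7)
The plan is a contradiction argument that combines the submultiplicativity of Corollary \ref{increase wrt Nv2}, the uniform gap supplied by Theorem \ref{3/2}, and a continuity property of $\mf{np}_k(G/H_\alpha)$ as $\alpha$ grows. Suppose the conclusion fails. Since $(H_\alpha)$ is decreasing and since nilpotency of class at most $k$ passes to closed subgroups, this forces $H_\alpha$ to be non-nilpotent of class at most $k$ for \emph{every} $\alpha$. Theorem \ref{3/2} then yields the uniform bound $\mf{np}_{k,G}(H_\alpha)\leq 1-\frac{3}{2^{k+1}}$, and Corollary \ref{increase wrt Nv2} applied with $N=H_\alpha$ gives
\[\mf{np}_k(G)\leq\pr{1-\frac{3}{2^{k+1}}}\mf{np}_k(G/H_\alpha)\qquad\text{for every }\alpha.\]

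The heart of the argument is to establish $\mf{np}_k(G/H_\alpha)\to\mf{np}_k(G)$. Using Lemma \ref{themeasureinquoitient} applied to the quotient map $G^{k+1}\to(G/H_\alpha)^{k+1}$, one rewrites $\mf{np}_k(G/H_\alpha)=\mf m_{G^{k+1}}(A_\alpha)$, where
\[A_\alpha=\set{(x_1,\dotsc,x_{k+1})\in G^{k+1}:[x_1,\dotsc,x_{k+1}]\in H_\alpha}.\]
Each $A_\alpha$ is closed because the commutator is continuous and $H_\alpha$ is closed, the net $(A_\alpha)$ is decreasing, and $\bigcap_\alpha A_\alpha=\mathcal N_k(G)$ because $\bigcap_\alpha H_\alpha=1$. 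I would then invoke a standard Radon-measure argument on the compact space $G^{k+1}$: given $\varepsilon>0$, outer regularity produces an open $U\supseteq\mathcal N_k(G)$ with $\mf m_{G^{k+1}}(U)<\mf{np}_k(G)+\varepsilon$; the decreasing net of closed sets $A_\alpha\cap U^c$ has empty intersection, so the finite intersection property in the compact space $G^{k+1}$ together with directedness of the index set forces $A_{\alpha_0}\subseteq U$ for some $\alpha_0$, and hence $\mf m_{G^{k+1}}(A_\alpha)<\mf{np}_k(G)+\varepsilon$ for all $\alpha\geq\alpha_0$.

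Feeding this back into the displayed inequality gives $\mf{np}_k(G)\leq\pr{1-\frac{3}{2^{k+1}}}\mf{np}_k(G)$, contradicting $\mf{np}_k(G)>0$. The only delicate point I foresee is the continuity step: because the index set is an arbitrary directed set rather than $\mathbb N$, countable additivity is unavailable, and one must lean on outer regularity of Haar measure together with the finite intersection property in the compact space $G^{k+1}$. All remaining steps are routine applications of the results already established.
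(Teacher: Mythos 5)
Your argument is correct, but it is genuinely different from the one in the paper. The paper negates the conclusion and builds, step by step, an arbitrarily long closed normal series $G\geq H_{\alpha_1}\geq\dots\geq H_{\alpha_n}\geq\dots$ whose factors are not nilpotent of class at most $k$: since $H_{\alpha_n}$ is not $k$-step nilpotent it contains a tuple with $[x_1,\dotsc,x_{k+1}]\neq 1$, and $\bigcap_\alpha H_\alpha=1$ supplies some $H_{\alpha_{n+1}}$ avoiding this commutator, so the factor $H_{\alpha_n}/H_{\alpha_{n+1}}$ is again not $k$-step nilpotent; this contradicts the bounded-length Theorem \ref{finitelength}, which already encapsulates the iterated use of submultiplicativity and the $1-\frac{3}{2^{k+1}}$ gap. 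You instead apply Corollary \ref{increase wrt Nv2} and Theorem \ref{3/2} only once per index and obtain the contradiction from the net-continuity statement $\mf m_{G^{k+1}}(A_\alpha)\to\mf{np}_k(G)$, proved via outer regularity of Haar measure and the finite intersection property for the decreasing net of compact sets $A_\alpha\cap U^c$ (correctly flagging that countable additivity is unavailable for a general directed set). The paper's route stays purely algebraic once Theorem \ref{finitelength} is in hand and needs no regularity argument; your route bypasses Theorem \ref{finitelength} altogether, is quantitative (it in fact shows $\mf{np}_k(G/H_\alpha)$ converges to $\mf{np}_k(G)$, forcing $\mf{np}_k(G)\leq\pr{1-\frac{3}{2^{k+1}}}\mf{np}_k(G)$), and would work verbatim for any eventually-applicable uniform gap. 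One small point: Lemma \ref{themeasureinquoitient} is stated for continuous functions, so your identification $\mf{np}_k(G/H_\alpha)=\mf m_{G^{k+1}}(A_\alpha)$ needs the standard extension to indicator functions of closed sets (the pushforward of Haar measure under a continuous epimorphism is Haar measure, by Riesz uniqueness and regularity); this is the same level of rigor at which the paper itself applies Lemmas \ref{Gotosubgroups} and \ref{themeasureinquoitient} to indicators in Proposition \ref{2.4n}, so it is an acceptable, though worth-mentioning, step.
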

\begin{proof}
If none of $H_\alpha$'s are nilpotent of class at most $k$, then we can construct an infinite closed normal series of $G$ which its factors are not nilpotent of class at most $k$, violates Theorem \ref{finitelength}; indeed, suppose we have constructed a closed normal series, 
\begin{equation}\label{ons}
G=H_0\geq H_{\alpha_1}\geq\dots\geq H_{\alpha_n}
\end{equation} 
such that its factors are not nilpotent of class at most $k$. Since $H_{\alpha_n}$ is not $k$-step nilpotent, there exists $(x_1,\dotsc,x_{k+1})$ in $H_{\alpha_n}^{k+1}$ such that $[x_1,\dotsc,x_{k+1}]\neq1$. By the assumption, we can choose $H_{\alpha_{n+1}}$ such that $[x_1,\dotsc,x_{k+1}]\not\in H_{\alpha_{n+1}}$, so $H_{\alpha_{n+1}}$ can be added to (\ref{ons}).
\end{proof}
Since $\bigcap\set{N: N\,\text{is an open normal subgroup of } G}=1$, for a profinite group $G$, Theorem \ref{Decresingnet} gives another proof for the following well-known fact:
\begin{cor}(\cite[Theorem 1.19.]{MTVV})
Let $G$ be a profinite group with $\mf{np}_k(G)>0$. Then $G$ has a $k$-step nilpotent open subgroup.
\end{cor}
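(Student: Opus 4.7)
The statement is now an immediate application of Theorem \ref{Decresingnet}, so the proof proposal is short and essentially a bookkeeping exercise. The plan is to supply the decreasing net required by that theorem from the profinite structure of $G$ and then read off the conclusion.

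First I would recall the standard fact that in any profinite group $G$ the family $\mathcal F$ of open normal subgroups, ordered by reverse inclusion, is a directed set whose intersection is trivial. The directedness is because the intersection of two open normal subgroups is again open and normal, so $\mathcal F$ forms a decreasing net $(H_\alpha)_{\alpha\in\mathcal F}$ of closed normal subgroups of $G$. The fact that $\bigcap_{\alpha}H_\alpha=1$ is the usual Hausdorff/residually finite statement for profinite groups, equivalent to saying that the points of $G$ are separated by continuous homomorphisms to finite quotients.

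Next I would apply Theorem \ref{Decresingnet} to this net. Since $\mf{np}_k(G)>0$ by assumption, the theorem yields an index $\alpha_0$ such that $H_{\alpha_0}$ is nilpotent of class at most $k$. Because $H_{\alpha_0}\in\mathcal F$, it is by definition open in $G$, and hence $H_{\alpha_0}$ is the sought $k$-step nilpotent open subgroup.

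There is essentially no obstacle here beyond checking the hypotheses of Theorem \ref{Decresingnet}; the only thing worth being careful about is that the net be genuinely decreasing (which is arranged by the reverse-inclusion ordering on $\mathcal F$, together with the directedness noted above) and that the subgroups be closed normal (immediate, since open subgroups of a topological group are closed, and we have restricted to normal ones). If one wants the conclusion phrased as an open normal nilpotent subgroup, no extra work is needed; if an open nilpotent subgroup suffices (as stated), then the normality is simply an extra bonus.
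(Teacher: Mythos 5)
Your proposal is correct and follows essentially the same route as the paper: the paper also applies Theorem \ref{Decresingnet} to the family of open normal subgroups of the profinite group $G$, using that their intersection is trivial, to extract an open normal subgroup that is nilpotent of class at most $k$. The only difference is that you spell out the directedness and closedness checks, which the paper leaves implicit.
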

Theorem \ref{finitelength} has also another implication:
\begin{cor}\label{gotofinite}
Let $(G_\alpha)_{\alpha}$ be a collection of compact groups. If $$\mathbf{np}_k\left(\prod_\alpha G_\alpha\right)>0,$$ then $G_\alpha$'s are nilpotent of class at most $k$ for all but finitely many $\alpha$. 
\end{cor}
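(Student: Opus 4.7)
The plan is to derive a contradiction from Theorem \ref{finitelength}. Suppose for contradiction that infinitely many of the $G_\alpha$ fail to be nilpotent of class at most $k$; I shall build, inside $G := \prod_\alpha G_\alpha$, closed normal series of unbounded length whose factors are all non-$k$-nilpotent, contradicting the uniform length bound supplied by Theorem \ref{finitelength} (which is finite because $\mf{np}_k(G) > 0$).

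First I would pick a countably infinite sequence $\alpha_1, \alpha_2, \ldots$ of indices with each $G_{\alpha_i}$ not $k$-step nilpotent. For $n \geq 1$, set
\[ H_n := \{(g_\beta)_\beta \in G : g_{\alpha_i} = 1 \text{ for all } 1 \leq i < n\}. \]
Each $H_n$ is a closed normal subgroup of $G$, being a finite intersection of kernels of continuous coordinate projections onto normal factors; we have $H_1 = G$ and strict inclusions $H_n \supsetneq H_{n+1}$. The projection of $H_n$ onto coordinate $\alpha_n$ is a continuous surjection $H_n \to G_{\alpha_n}$ with kernel exactly $H_{n+1}$, so $H_n/H_{n+1} \cong G_{\alpha_n}$ as compact groups and is therefore not $k$-step nilpotent.

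Next, for each $m \in \mathbb N$ I would truncate to obtain the closed normal series of $G$
\[ 1 \leq H_{m+1} \leq H_m \leq \dots \leq H_1 = G. \]
Its upper factors $H_n/H_{n+1}$ for $1 \leq n \leq m$ are non-$k$-nilpotent by the previous paragraph; the bottom factor $H_{m+1}$ itself projects continuously onto $G_{\alpha_{m+1}}$ (coordinate $\alpha_{m+1}$ still being unconstrained), hence is non-$k$-nilpotent since $k$-step nilpotence passes to continuous quotients. The series therefore has $m+1$ non-$k$-nilpotent factors, and $m$ is arbitrary, contradicting Theorem \ref{finitelength}.

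I expect the only mildly delicate point is remembering to keep one extra index $\alpha_{m+1}$ in reserve when truncating, so that the terminal block $H_{m+1}$ is itself non-$k$-nilpotent; otherwise the series would not satisfy the hypothesis of Theorem \ref{finitelength}. Everything else reduces to standard facts about direct products of compact groups: subgroups cut out by coordinate conditions are closed and normal, and the associated quotients identify topologically with the removed factors.
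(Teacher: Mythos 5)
Your proof is correct and follows exactly the route the paper intends: the corollary is stated as a direct implication of Theorem \ref{finitelength}, and your coordinate subgroups $H_n$ (closed, normal, with $H_n/H_{n+1}\cong G_{\alpha_n}$) supply precisely the arbitrarily long closed normal series with non-$k$-nilpotent factors that contradict its bound. Your care with the reserved index $\alpha_{m+1}$, ensuring the bottom factor $H_{m+1}$ is itself non-$k$-nilpotent, is the right way to meet the hypothesis that \emph{all} factors of the series fail $k$-step nilpotence.
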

Now we're ready to show that the class $\mathsf{PA}_k$ contains connected compact groups. A little preparation is needed before proving the later. 
\begin{lem}\label{zariskiclosed} 
Let $G$ be a compact group. Then $G$ contains no nontrivial closed semi simple connected compact Lie subgroup $H$ with {\small $\mf{np}_{k,G}(H)>0$}. 
\end{lem}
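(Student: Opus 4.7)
The plan is to show that, for any Lie subgroup $H$ of $G$, the relative probability $\mf{np}(H;x_1,\dotsc,x_{k+1})$ satisfies a zero-one dichotomy, and then invoke Proposition \ref{nocamn}. Precisely, fix $x_1,\dotsc,x_{k+1}\in G$ and consider the continuous map
\[
\phi\colon H^{k+1}\To G,\qquad \phi(\mf y)=[x_1y_1,\dotsc,x_{k+1}y_{k+1}],
\]
so that $\mf{np}(H;x_1,\dotsc,x_{k+1})=\mf m_{H^{k+1}}(\phi^{-1}(1))$. If this measure always lies in $\{0,1\}$, then the hypothesis $\mf{np}_{k,G}(H)>0$ forces the supremum, and hence some particular value $\mf{np}(H;x_1,\dotsc,x_{k+1})$, to equal $1$. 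Proposition \ref{nocamn} then produces a nilpotency identity of class at most $k$ on $H$. But a nontrivial semisimple compact connected Lie group is perfect ($[H,H]=H$, since its Lie algebra is perfect), so it cannot be nilpotent, yielding the desired contradiction.

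To establish the dichotomy, I would use that every compact (Hausdorff) group is a projective limit of compact Lie groups: write $G=\varprojlim_\alpha G_\alpha$ with continuous surjective homomorphisms $\pi_\alpha\colon G\to G_\alpha$ that jointly separate points of $G$. The restriction $\pi_\alpha|_H\colon H\to G_\alpha$ is a continuous homomorphism between Lie groups, hence real-analytic by the classical fact that continuous homomorphisms between Lie groups are automatically smooth. Composing with analytic left-translation by $\pi_\alpha(x_i)$ and with the iterated commutator map of $G_\alpha$ shows that
\[
\phi_\alpha:=\pi_\alpha\circ\phi\colon H^{k+1}\To G_\alpha
\]
is real-analytic, with connected source $H^{k+1}$ (because $H$ is connected).

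From real-analytic geometry, an analytic subset of a connected analytic manifold is either the entire manifold or a proper analytic subvariety, and in the latter case it has Haar measure zero. Applied to $\phi_\alpha^{-1}(1_{G_\alpha})\subseteq H^{k+1}$, this yields $\mf m_{H^{k+1}}(\phi_\alpha^{-1}(1_{G_\alpha}))\in\{0,1\}$. Since the $\pi_\alpha$ separate points, $\phi^{-1}(1)=\bigcap_\alpha\phi_\alpha^{-1}(1_{G_\alpha})$; so if some $\phi_\alpha$ is not identically $1_{G_\alpha}$ on $H^{k+1}$, then $\mf m_{H^{k+1}}(\phi^{-1}(1))=0$, whereas if every $\phi_\alpha$ is trivial, then $\phi\equiv 1$ and the measure equals $1$. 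This is the desired dichotomy.

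The main obstacle is that $G$ itself need not be a Lie group, so analyticity is not available at the level of $\phi$; the reduction through the Lie quotients $G_\alpha$ is what makes the argument go through. Everything else---assembling the dichotomy, invoking Proposition \ref{nocamn}, and observing that semisimple compact connected Lie groups are perfect---is routine bookkeeping.
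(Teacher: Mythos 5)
Your proof is correct, and it reaches the same pivot as the paper --- namely that $\mathcal N(H;x_1,\dotsc,x_{k+1})$ cannot have measure strictly between $0$ and $1$, so positive relative probability forces $\mf{np}(H;x_1,\dotsc,x_{k+1})=1$, whence Proposition \ref{nocamn} makes $H$ nilpotent and semisimplicity (perfectness) makes it trivial --- but the mechanism you use for the dichotomy is genuinely different. The paper's one-line argument asserts that $\mathcal N$ is Zariski-closed in $H^{k+1}$ and that a proper Zariski-closed subset is Haar-null, implicitly exploiting that a compact (semisimple) Lie group is a real algebraic group; it does not comment on the fact that the translating elements $x_1,\dotsc,x_{k+1}$ lie in the ambient compact group $G$, which need not be a Lie group at all, so the defining equation is not literally a polynomial condition inside an algebraic group. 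Your route repairs exactly this point: by writing $G=\varprojlim_\alpha G_\alpha$ with Lie quotients separating points, you replace the single algebraic equation by the family of real-analytic conditions $\pi_\alpha\circ\phi=1$ on the connected analytic manifold $H^{k+1}$, and the identity-theorem/null-set property of proper analytic subsets gives the zero-one law; monotonicity of measure handles the (possibly uncountable) intersection. What the paper's approach buys is brevity; what yours buys is that the rigidity step is carried out in a setting where analyticity is actually available, making the argument self-contained for non-Lie ambient $G$ and arbitrary translates $x_i\in G$. The remaining steps (continuous homomorphisms of Lie groups are analytic, $H^{k+1}$ connected since $H$ is, a nontrivial perfect group cannot be nilpotent) are all correctly invoked.
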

\begin{proof}
Suppose $\mathcal N=\mathcal N(H;x_1,\dotsc,x_{k+1})$ has positive measure for some $x_1,\dotsc, x_{k+1}\in G$. Since $\mathcal N$ is Zariski-closed and has positive measure, it should be equal to $G$, so $H$ should be nilpotent by Proposition \ref{nocamn} and hence trivial by semi simplicity.   
\end{proof}
\begin{thm}\label{CCG} 
Let $G$ be a connected compact group with $\mf{np}_k(G)>0$. Then $G$ is an abelian group. 
\end{thm}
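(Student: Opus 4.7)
The plan is to reduce to the case of a connected compact Lie group via the Peter--Weyl theorem and then to annihilate the semi-simple part of every Lie quotient using Lemma \ref{zariskiclosed}. Recall that any compact group $G$ is the projective limit of its Lie quotients $L = G/N$, where $N$ ranges over closed normal subgroups with $G/N$ a Lie group and $\bigcap_N N = 1$; if $G$ is connected, each such $L$ is a connected compact Lie group. Since a projective limit of abelian groups is abelian in the category of topological groups, it suffices to prove that every such Lie quotient $L$ of $G$ is abelian.

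Fix a Lie quotient $L = G/N$. Corollary \ref{increase wrt Nv2} gives $\mf{np}_k(G) \leq \mf{np}_k(L) \cdot \mf{np}_{k,G}(N) \leq \mf{np}_k(L)$, so $\mf{np}_k(L) > 0$. Set $S := \overline{[L,L]}$; by the standard structure theory of connected compact Lie groups one has the almost direct product decomposition $L = Z(L)^0 \cdot S$, with $S$ a closed, normal, connected, semi-simple compact Lie subgroup of $L$. Since $S$ is closed and normal in $L$, another application of Corollary \ref{increase wrt Nv2} to the pair $(L,S)$ yields $\mf{np}_{k,L}(S) \geq \mf{np}_k(L) > 0$. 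Now invoke Lemma \ref{zariskiclosed} with $L$ playing the role of $G$ and $S$ playing the role of $H$: a nontrivial closed semi-simple connected compact Lie subgroup with positive relative $k$-nilpotent probability cannot exist, so $S = 1$, and hence $L$ is abelian.

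Combining the two steps, every Lie quotient of $G$ is abelian, so the projective limit $G$ is abelian. The substantive external inputs are the Peter--Weyl realization of compact groups as inverse limits of Lie quotients and the fact that the derived subgroup of a connected compact Lie group is a closed connected semi-simple Lie subgroup; both are classical. I expect the main obstacle to be purely conceptual bookkeeping in the reduction, since the Zariski-closure argument packaged in Lemma \ref{zariskiclosed} is doing all the real work on each Lie quotient, and the normality of $S$ in $L$ is exactly what lets us feed the hypothesis $\mf{np}_k(G) > 0$ into that lemma via Corollary \ref{increase wrt Nv2}.
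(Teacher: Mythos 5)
Your argument is correct, but it reaches the conclusion along a different structural route than the paper. The paper quotients by the center $\mf Z$ of $G$, invokes the structure theorem (the proof of \cite[Theorem 9.24]{HM}) to write $G/\mf Z$ as a product of simple compact Lie groups, uses Corollary \ref{gotofinite} to cut this down to finitely many nontrivial factors, and then kills each factor with Lemma \ref{zariskiclosed}. You instead approximate $G$ by its connected compact Lie quotients $L=G/N$, $N\in\mathfrak N(G)$ with $\bigcap N=1$ (the same Peter--Weyl fact the paper uses later in the proof of Theorem \ref{main}), and inside each $L$ you apply Corollary \ref{increase wrt Nv2} to the closed normal semi-simple subgroup $S=\overline{[L,L]}$ to get $\mf{np}_{k,L}(S)\geq\mf{np}_k(L)\geq\mf{np}_k(G)>0$, so Lemma \ref{zariskiclosed} forces $S=1$; since $[x,y]\in N$ for every $N\in\mathfrak N(G)$, $G$ is abelian. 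The probabilistic engine (Corollary \ref{increase wrt Nv2} plus the Zariski-closure Lemma \ref{zariskiclosed}) is the same in both proofs; what differs is the bookkeeping. Your reduction avoids Corollary \ref{gotofinite} and the infinite-product decomposition of $G/\mf Z$ altogether, at the price of citing the Lie-quotient approximation and the decomposition $L=Z(L)^0\cdot\overline{[L,L]}$ of a connected compact Lie group. Two minor remarks: your per-quotient step is essentially the connected case of the paper's Lemma \ref{CLG}, so you could shorten the argument further by quoting that lemma directly for each $L$ (a connected Lie quotient with $\mf{np}_k(L)>0$ is its own identity component, hence a torus); and note that your use of $S$ normal in $L$ lets you stay with Corollary \ref{increase wrt Nv2} rather than the relative Theorem \ref{increase wrt N}, which is a small simplification compared with the paper's treatment of the Lie case.
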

\begin{proof}
Note first that simply connected compact groups are semi simple (\cite[Theorem 9.29.]{HM}). Also, Corollary \ref{increase wrt Nv2} implies that $\mf{np}_k(G/\mf Z)>0$, where $\mf Z$ stands for the center of $G$. Therefore by the proof of 
\cite[Theorem 9.24]{HM} and Corollary \ref{gotofinite}, there is a finite family $\{L_1,\dotsc,L_n\}$ of  simple simply connected compact Lie groups such that 
$\frac{G}{\mf Z}\cong\prod_iL_i$, whence implies $\prod_i\mf{np}_k(L_i)>0$. Now the result follows from Lemma \ref{zariskiclosed}. 
\end{proof}
Also compact Lie groups belong to $\mathsf{PA}_k$. Next we use $G^0$ to denote the connected component of a compact group $G$.
\begin{lem}\label{CLG}
Let $G$ be a compact Lie group with $\mf{np}_k(G)>0$. Then $G^0$ is torus. In particular $G$ has an open $k$-step nilpotent subgroup.
\end{lem}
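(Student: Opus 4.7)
The plan is to reduce everything to the identity component $G^0$, which in a compact Lie group has only finitely many cosets and is therefore an open, closed, normal subgroup of $G$. First I would invoke Corollary \ref{increase wrt Nv2} with $N=G^0$: the hypothesis $\mf{np}_k(G)>0$ together with the inequality
\[\mf{np}_k(G)\le\mf{np}_k(G/G^0)\,\mf{np}_{k,G}(G^0)\]
forces $\mf{np}_{k,G}(G^0)>0$. Unwinding Definition \ref{rknil}, this yields elements $x_1,\dots,x_{k+1}\in G$ for which the set
\[\mathcal N(G^0;x_1,\dots,x_{k+1})=\{(y_1,\dots,y_{k+1})\in(G^0)^{k+1}:[x_1y_1,\dots,x_{k+1}y_{k+1}]=1\}\]
has positive Haar measure inside $(G^0)^{k+1}$.

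Next I would replay the Zariski-closedness argument that underlies Lemma \ref{zariskiclosed}, this time applied to the connected (not necessarily semisimple) compact Lie group $G^0$. Via the Peter--Weyl theorem we may regard $G^0$ as a real linear algebraic group, so the iterated commutator is a polynomial map and $\mathcal N(G^0;x_1,\dots,x_{k+1})$ is Zariski-closed in $(G^0)^{k+1}$. Since $G^0$ is connected, $(G^0)^{k+1}$ is Zariski-irreducible, and any proper Zariski-closed subset has strictly smaller dimension and hence Haar measure zero. Positivity of the measure therefore upgrades to the full identity $\mathcal N(G^0;x_1,\dots,x_{k+1})=(G^0)^{k+1}$, so $\mf{np}(G^0;x_1,\dots,x_{k+1})=1$. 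Proposition \ref{nocamn} then implies that $G^0$ is nilpotent of class at most $k$.

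To finish I would invoke structure theory of compact Lie groups: a connected compact Lie group decomposes (up to a finite central kernel) as a product of a torus and a simply connected semisimple factor, and the latter is perfect, so it must be trivial whenever the group is nilpotent. Hence $G^0$ is a torus. Being abelian it is in particular $k$-step nilpotent, and being open in $G$ it serves as the open $k$-step nilpotent subgroup asserted in the second half of the statement.

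The main obstacle is conceptual rather than computational: one must recognize that connectedness of $G^0$ promotes positive Haar measure of the commuting locus to the full identity via Zariski-closedness, which is precisely the mechanism already isolated in Lemma \ref{zariskiclosed}. Once this is in hand, the rest is a direct application of Corollary \ref{increase wrt Nv2} and Proposition \ref{nocamn} together with standard structure theory of compact Lie groups.
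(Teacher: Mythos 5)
Your proposal is correct, but it takes a different route from the paper's at the decisive step. The paper first uses Corollary \ref{increase wrt Nv2} and then Theorem \ref{increase wrt N} once more to get $0<\mf{np}_k(G)\leq\mf{np}_{k,G}(G^0)\leq\mf{np}_{k,G/\mf Z_0}(G^0/\mf Z_0)$, where $\mf Z_0$ is the center of $G^0$, and then applies Lemma \ref{zariskiclosed} to the \emph{semisimple} connected Lie group $G^0/\mf Z_0$ inside $G/\mf Z_0$ to conclude that this quotient is trivial, i.e.\ $G^0=\mf Z_0$ is abelian, hence a torus. You instead run the Zariski-closure mechanism directly on the connected, not necessarily semisimple, group $G^0$: positivity of $\mf{np}(G^0;x_1,\dotsc,x_{k+1})$ plus irreducibility of $(G^0)^{k+1}$ forces $\mf{np}(G^0;x_1,\dotsc,x_{k+1})=1$, so $G^0$ is nilpotent of class at most $k$ by Proposition \ref{nocamn}, and then the decomposition of a connected compact Lie group as a torus times a simply connected semisimple factor modulo a finite central subgroup, with the semisimple factor perfect, yields the torus conclusion. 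What each approach buys: the paper's version stays entirely within its already-proved lemmas (no new structure theory beyond semisimplicity of $G^0/\mf Z_0$) at the cost of a second application of Theorem \ref{increase wrt N}; yours avoids both the central quotient and Lemma \ref{zariskiclosed}, and gives the slightly stronger intermediate fact that $G^0$ itself is $k$-step nilpotent, at the cost of extending the algebraicity argument beyond the semisimple case and adding the ``perfect nilpotent is trivial'' step. Your extension is legitimate here precisely because $G$ is assumed to be a Lie group, so the fixed elements $x_i$ live in a linear (real algebraic, by Chevalley's theorem on compact linear groups) group and the commutator condition is polynomial; if you want to sidestep questions about irreducibility of products of real varieties, note that it suffices to observe that $\mathcal N(G^0;x_1,\dotsc,x_{k+1})$ is the zero set of a real-analytic map on the connected manifold $(G^0)^{k+1}$, and a real-analytic function vanishing on a set of positive Haar measure vanishes identically. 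Finally, you use without comment that $G^0$ is open in $G$; this is immediate since Lie groups are locally connected (the paper instead cites finiteness of $[G:G^0]$), so no gap there.
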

\begin{proof}
Denote by $\mf Z_0$ the center of $G^0$. By Corollary \ref{increase wrt Nv2} and Theorem \ref{increase wrt N}, we obtain
\[ 0<\mf{np}_k(G)\leq\mf{np}_{k,G}(G^0)\leq\mf{np}_{k,G/\mf Z_0}(G^0/\mf Z_0).\]
Now the result obtained with Corollary \ref{zariskiclosed}, because $G^0/\mf Z_0$ is a semi simple connected compact Lie subgroup of $G/\mf Z_0$.The last claim of Lemma follows from the fact that $G^0$ is a closed subgroup of $G$ of finite index (\cite[Theorem 6.36.]{HM}), so it should be open.
\end{proof}
\begin{rem}\label{YCor} 
The above lemma is attributed to Yves de Cornulier (\cite{Ycor}), which is given in response to a question posed by the second author (\cite{msol}). Of course, the above proof is somewhat different from the proof presented in \cite{Ycor}.
\end{rem}
The main result of the Lemma \ref{CLG} is true for arbitrary compact groups. Denote by $\mathfrak{N}(G)$ the set of all closed normal subgroups $N$ of $G$ such that $G/N$ is a Lie group. 
\begin{proof}[Proof of Theorem \ref{main}]
By Corollary \ref{increase wrt Nv2}, $\mf{np}_k(G/N)>0$ for all $N\in\mathfrak{N}(G)$. Lemma \ref{CLG}, now implies that $(G/N)^0=G^0N/N$ (\cite[Lemma 9.18.]{HM}) is an open abelian group and so, $G^0N$ becomes open and normal in $G$. If none  of $G^0N$'s is $k$-step nilpotent, $N\in\mathfrak{N}(G)$, then we can construct a closed normal series of $G$ whose factors are not nilpotent of class at most $k$, violates Theorem \ref{finitelength}; indeed, suppose we have constructed a closed normal series,
\begin{equation}\label{ons}
G\geq G^0N_1\geq\dots\geq G^0N_s
\end{equation} 
such that its factors are not nilpotent of class at most $k$ and $N_1,\dotsc, N_s\in\mathfrak{N}(G)$. Since $G^0N_s$ is not $k$-step nilpotent, there exists $(x_1,\dotsc,x_{k+1})$ in $(G^0N_s)^{k+1}$ such that $[x_1,\dotsc,x_{k+1}]\neq1$. But, $[x_1,\dotsc,x_{k+1}]$ is equal 1 modulus $N_s$, for $G^0N_s/N_s$ is abelian. Therefore $[x_1,\dotsc,x_{k+1}]\in N_s$, and by the fact that $\bigcap\{N: N\in \mathfrak{N}(G)\}=1$ we obtain $N_{s+1}$ in $\mathfrak{N}(G)$, which does not contain $[x_1,\dotsc,x_{k+1}]$.
\end{proof}

The following question is proposed in \cite{AS0} in order to resolve Question \ref{Question}.

\begin{qu}[Question 3.2 of \cite{AS0}] Are there finitely many words $w_{i,j}$ ($1 \leq i \leq n$, $1 \leq j \leq k + 1$) in the free group
on $k + 1$ generators such that if $G$ is a compact group, $(x_1, \dots , x_{k+1}), \mf u = (u_1, \dots , u_{k+1}) \in G^{k+1}$ and 
$[x_1 w_{i,1}(\mf u), \dots , x_{k+1} w_{i,k+1}(\mf u)] = 1$ for all $i \in \{1, \dots , n\}$ then $[u_1, \dots , u_{k+1}] = 1$?
\end{qu}

To settle Question \ref{Question}, we will prove that using Theorem \ref{main}, we may study the following slightly simpler question on finite groups.
  
\begin{qu}\label{qusnew} Are there finitely many words $w_{i,j}$ ($1 \leq i \leq n$, $1 \leq j \leq k + 1$) in the free group
	on $k + 1$ generators such that if $G$ is a finite group, $(x_1, \dots , x_{k+1}), \mf u = (u_1, \dots , u_{k+1}) \in G^{k+1}$, $\langle u_1, \dots , u_{k+1} \rangle$ is abelian-by-nilpotent and 	$[x_1 w_{i,1}(\mf u), \dots , x_{k+1} w_{i,k+1}(\mf u)] = 1$ for all $i \in \{1, \dots , n\}$ then $[u_1, \dots , u_{k+1}] = 1$?
\end{qu}

\begin{thm}
If the answer of Question \ref{qusnew} is positive, the answer of Question \ref{Question} is also affirmative.
\end{thm}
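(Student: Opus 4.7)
Assume Question~\ref{qusnew} has a positive answer, with witnessing words $w_{i,j}$ ($1 \le i \le n$, $1 \le j \le k+1$). The plan is to combine Theorem~\ref{main} with a transfer of Question~\ref{qusnew} from finite groups to compact abelian-by-nilpotent groups by means of residual finiteness. For a compact $G$ with $\mf{np}_k(G) > 0$, Theorem~\ref{main} supplies $G^0$ abelian and a closed normal nilpotent $N$ of class $\le k$ with $H := G^0 N$ open in $G$. Then $H$ is abelian-by-nilpotent: $G^0$ is an abelian closed normal subgroup of $H$, $H/G^0 \cong N/(N \cap G^0)$ is nilpotent of class $\le k$, and the same inherited structure holds for every subgroup of $H$.

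The central step is the following lifting lemma. For any compact abelian-by-nilpotent $H'$ and any $\mf u, \mf x \in (H')^{k+1}$ satisfying $[x_1 w_{i,1}(\mf u), \dots, x_{k+1} w_{i,k+1}(\mf u)] = 1$ for every $i$, one has $[u_1, \dots, u_{k+1}] = 1$. Indeed, $\Gamma := \langle \mf u, \mf x\rangle$ is a finitely generated abelian-by-nilpotent group (inherited from $H'$), hence polycyclic and therefore residually finite; alternatively, $\Gamma$ is residually finite as a finitely generated subgroup of the compact group $H'$, via its projections to the Lie quotients $H'/M$, $M \in \mathfrak{N}(H')$, and Malcev's theorem on linear groups. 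If $[\mf u] \ne 1$ in $\Gamma$, I pick a finite quotient $\pi : \Gamma \to F$ preserving this inequality; the images $\pi(\mf u), \pi(\mf x)$ satisfy the hypotheses of Question~\ref{qusnew} in $F$, which forces $\pi([\mf u]) = 1$, a contradiction.

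Taking the contrapositive of the lifting lemma, combined with the Haar invariance $\mf m(A_i(\mf u)) = \mf{np}_k(H')$ for the translates $A_i(\mf u) := \{\mf x \in (H')^{k+1} : [x_1 w_{i,1}(\mf u), \ldots, x_{k+1}w_{i,k+1}(\mf u)] = 1\}$ and a union bound, I obtain the clean consequence: any compact abelian-by-nilpotent $H'$ with $\mf{np}_k(H') > 1 - 1/n$ is automatically $k$-step nilpotent. It therefore suffices to exhibit an open subgroup $K \supseteq G^0$ of $G$ with $\mf{np}_k(K) > 1 - 1/n$; such a $K$ is abelian-by-nilpotent (as a subgroup of $H$, after replacing $K$ by $K \cap H$ if necessary) and so $k$-step nilpotent by the previous sentence, which then furnishes the open $k$-step nilpotent subgroup of $G$ demanded by Question~\ref{Question}.

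The hard part will be this final construction of $K$, since we only have $\mf{np}_k(G) > 0$ to start. My plan is to exploit the profinite quotient $G/G^0$: by Corollary~\ref{increase wrt Nv2} it satisfies $\mf{np}_k(G/G^0) > 0$, so Theorem~\ref{anotherproof} yields an open $k$-step nilpotent subgroup of $G/G^0$ whose preimage is an open $K \supseteq G^0$ in $G$ with $K/G^0$ nilpotent of class $\le k$ and hence $\mf{np}_k(K/G^0) = 1$. Corollary~\ref{increase wrt Nv2} applied to $K$ then gives $\mf{np}_k(K) \le \mf{np}_{k,K}(G^0)$, reducing the task to bounding $\mf{np}_{k,K}(G^0)$ from below. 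Pushing this bound above the threshold $1 - 1/n$---on a suitable further open refinement of $K$ chosen so that the conjugation action of $K$ on $G^0$ becomes as trivial as possible (controlled through the Lie quotient analysis in Lemma~\ref{CLG} and the discreteness of the automorphism group of a torus)---is the delicate technical step I expect to be the main hurdle of this proof.
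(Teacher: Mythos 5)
Your first half (the ``lifting lemma'') is sound and coincides with the paper's key step: the finitely generated group $\langle \mf u,\mf x\rangle$ is residually finite (the paper gets this from Theorem \ref{main} plus Hall's theorem \cite[15.4.1]{Rob}; note, though, that your claim that a finitely generated abelian-by-nilpotent group is \emph{polycyclic} is false, e.g. $\mathbb Z\wr\mathbb Z$, so you must rely on your fallback via Hall or Malcev), and passing to finite quotients converts the hypothesis of Question \ref{qusnew} into $[u_1,\dots,u_{k+1}]=1$. The genuine gap is in the measure-theoretic half. Your union bound forces you to exhibit an open abelian-by-nilpotent subgroup $K$ with $\mf{np}_k(K)>1-\tfrac1n$, where $n$ comes from Question \ref{qusnew} and may be arbitrarily large; nothing in the hypothesis $\mf{np}_k(G)>0$ supplies this, and producing an open subgroup with $k$-nilpotence degree arbitrarily close to $1$ is essentially as strong as answering Question \ref{Question} itself. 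Moreover the reduction you sketch is not a reduction: Corollary \ref{increase wrt Nv2} gives the \emph{upper} bound $\mf{np}_k(K)\leq\mf{np}_k(K/G^0)\,\mf{np}_{k,K}(G^0)=\mf{np}_{k,K}(G^0)$, so bounding $\mf{np}_{k,K}(G^0)$ from below says nothing about a lower bound for $\mf{np}_k(K)$; and your hoped-for final step (making the conjugation action of $K$ on $G^0$ ``as trivial as possible'') is left entirely open and is exactly where the difficulty sits.

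The missing idea, which is how the paper avoids any ``probability close to $1$'' requirement, is \cite[Theorem 3.1]{AS0}: a Steinhaus-type statement that for the positive-measure set $\mathcal N_k(G)$ there is a neighborhood $V$ of $1$ such that the intersection of any $n$ translates $\mathcal N_k(G)\mf y_i$ with $\mf y_i\in V^{k+1}$ still has positive measure, hence is nonempty. Combining this with continuity of the word maps (choose a neighborhood $U$ of $1$ with $w_{i,j}(U^{k+1})\subseteq V$, and one may take $U\subseteq G^0N$ so that $\langle u_1,\dots,u_{k+1}\rangle$ is abelian-by-nilpotent), one gets for every $\mf u\in U^{k+1}$ some $\mf x\in G^{k+1}$ --- crucially not required to lie in any open subgroup of large nilpotence degree --- with $[x_1w_{i,1}(\mf u),\dots,x_{k+1}w_{i,k+1}(\mf u)]=1$ for all $i$; your lifting argument then yields $[u_1,\dots,u_{k+1}]=1$ on all of $U^{k+1}$, so $\langle U\rangle$ is the desired open $k$-step nilpotent subgroup. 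Without this translate-intersection tool (or an equivalent convolution-continuity argument) in place of the union bound, your proposal does not close.
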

\begin{proof}
First note that by Theorem \ref{main} together with Hall Theorem, \cite[15.4.1]{Rob}, all finitely generated subgroups of $G$ are residually finite.\par 
Let $n$ be the positive integer in Question \ref{qusnew}. By \cite[Theorem 3.1]{AS0} there exists an open subset $V$ of $G$ containing $1$ such that 
\begin{align}\label{cap}
\mathbf{m}\pr{\bigcap_{i=1}^n \mathcal{N}_k(G)\mf y_j}>0,
\end{align}
where $\mf y_i\in V^{k+1}$, $i=1,\dotsc,n$. By continuity of word maps, we can find an open neighborhood $U$ of $1$, such that $w_{i,j}(\mf u)\in V$ for 
$$\mf u=(u_1,\dotsc,u_{k+1})\in U^{k+1},$$ and $1 \leq i \leq n$ , $1 \leq j \leq k + 1$. Now, for $\mf u=(u_1,\dotsc,u_{k+1})\in U^{k+1}$, (\ref{cap}) ensures the existence of $x_1, \dots , x_{k+1}\in G$, such that 
\begin{align}
\label{EQ}
[x_1 w_{i,1}(\mf u), \dots , x_{k+1} w_{i,k+1}(\mf u)] = 1,\quad (1 \leq i \leq n).
\end{align}
But $\valu{x_i, u_i: i=1,\dotsc, k+1}$ is residually finite, so by passing to its finite quotients and the correctness of the Question \ref{qusnew}, we get from (\ref{EQ}) that $[u_1, \dots , u_{k+1}] = 1$, whence the open subgroup $\valu{U}$ is a $k$-step open subgroup of $G$.
\end{proof}


\begin{thebibliography}{99}
\bibitem{AS0} A. Abdollahi, M. Soleimani Malekan,   Compact groups with a set of positive Haar measure satisfying a nilpotent law, Mathematical Proceedings of the Cambridge Philosophical Society  \href{https://doi.org/10.1017/S0305004121000542}{https://doi.org/10.1017/S0305004121000542}

\bibitem{AS} A. Abdollahi, M. Soleimani Malekan, Profinite groups with many elements of bounded order, to appear Advances in Group Theory and Applications. \href{https://arxiv.org/abs/2012.13886}{https://arxiv.org/abs/2012.13886}.

\bibitem{ERL} A. Erfanian, R. Rezaei and P. Lescot, On the relative commutativity degree of a subgroup of a finite group, Comm.  Algebra, 35, No. 12, (2007) 4183-4197.	

\bibitem{folland} G. B. Folland, A course in abstract harmonic analysis, Studies in Advanced Mathematics, CRC Press, Boca Raton, FL, 1995.	

\bibitem{G} P. X. Gallagher, The number of conjugacy classes in a finite group, Math. Z. 118, (1970) 175-179. 

\bibitem{GR} R. M. Guralnick and G. R. Robinson, On the commuting probability in finite groups, J. Algebra, 300 (2006) 509-528.

\bibitem{Gus} W. H. Gustafson, What is the probability that two group elements commute? Amer. Math. Monthly 80 (1973), 1031-1034.

\bibitem{HR} K. H. Hofmann and F. G. Russo, The probability that $x$ and $y$ commute in a compact group, Math. Proc. Camb. Phil. Soc. 153 (2012)  557-571.

\bibitem{HM}
Karl H. Hofmann and Sidney A. Morris. The Structure of Compact Groups: A Primer for the Student – A Handbook for the Expert, Berlin, Boston: De Gruyter, 2020. \href{https://doi.org/10.1515/9783110695991}{https://doi.org/10.1515/9783110695991}

\bibitem{LP} L. L\'evai and L. Pyber, Profinite groups with many commuting pairs or involutions, Arch. Math. (Basel) 75 (2000) 1-7.

\bibitem{MTVV} 
A. Martino, M. C. H.  Tointon, M. Valiunas and E. Ventura, Probabilistic nilpotence in infinite groups, Isr. J. Math. (2021). \href{https://doi.org/10.1007/s11856-021-2168-3}{https://doi.org/10.1007/s11856-021-2168-3}

\bibitem{MSC}
M. R. M. Moghaddam, A. R. Salemkar and K. Chiti, $n$-isoclinism classes and $n$-nilpotency degree of finite groups, Algebra Colloquium 12 (2005), 255-261.

\bibitem{N} H. Nagao, On a conjecture of Brauer for $p$-solvable groups, Journal of Mathematics, Osaka City University, Vol. 13, No. I. (1962)  35-38.

\bibitem{Rob} D. J. S. Robinson, A course the theory of groups, Second edition, Graduate Texts in Mathematics, 80. Springer-Verlag, New York, 1996.

\bibitem{msol}
Meisam Soleimani Malekan (\href{https://mathoverflow.net/users/84700/meisamsoleimani malekan}{https://mathoverflow.net/users/84700/meisamsoleimani malekan}). Can the degree of $k$-nilpotence of a simple simply connected compact Lie group be in $(0, 1)$? MathOverflow.
\href{https://mathoverflow.net/q/404459}{https://mathoverflow.net/q/404459} (version: 2021-09-21).

\bibitem{Ycor}
YCor (https://mathoverflow.net/users/14094/ycor). Can the degree of $k$-nilpotence of a simple simply connected compact Lie group be in $(0, 1)$? \href{https://mathoverflow.net/q/404514}{https://mathoverflow.net/q/404514} (version: 2021-09-22).
 

\end{thebibliography}
\end{document}